\newcommand{\ulam}{\textbf{U}}
\newcommand{\mrow}{\textbf{M}}
\newcommand{\esep}{\textbf{K}}
\newcommand{\smf}{\hspace{0.008 cm}^\smallfrown}
\newcommand{\dom}{\text{dom}}
\newcommand{\ran}{\text{ran}}
\newcommand{\card}{\text{Card}}
\newcommand{\mb}[1]{\mathbb{#1}}
\newcommand{\mc}[1]{\mathcal{#1}}
\newcommand{\oo}{\omega}
\newcommand{\setm}{\setminus}
\newcommand{\omg}{{\omega_1}}
\newcommand{\uhp}{\upharpoonright}
\title{On spaces with $\sigma$-closed-discrete dense sets}
\author{Rodrigo R. Dias, Daniel T. Soukup}
\date{\today}
\subjclass[2010]{54D65, 54A25, 54A35, 03E55}
\keywords{separable, $K_0$, sigma-closed-discrete, dense, weakly compact, d-separable, e-separable}
\address{Universidade Federal do ABC,
  Avenida dos Estados, 5001,
  Santo Andr\'e, SP,
  09210-580,
  Brazil}
\email{rodrigo.dias@ufabc.edu.br}
\address{Universit\"at Wien
Kurt G\"odel Research Center for Mathematical Logic
W\"ahringer Strasse 25
1090 WIEN
AUSTRIA}
 \email[Corresponding author]{daniel.soukup@univie.ac.at}
  \urladdr{http://www.logic.univie.ac.at/~soukupd73/}
\begin{document}
\begin{abstract}
The main purpose of this paper is to study \emph{$e$-separable spaces}, originally introduced by Kurepa as $K_0'$ spaces; we call a space $X$ $e$-separable iff $X$ has a dense set which is the union of countably many closed discrete sets. We primarily focus on the behaviour of $e$-separable spaces under products and the cardinal invariants that are naturally related to $e$-separable spaces. Our main results show that the statement ``there is a product of at most $\mathfrak c$ many $e$-separable spaces that fails to be $e$-separable'' is equiconsistent with the existence of a weakly compact cardinal.
\end{abstract}
\maketitle

\newtheorem{defin}{Definition}[section]

\newtheorem{prop}[defin]{Proposition}
\newtheorem{question}[defin]{Question}
\newtheorem{obs}[defin]{Observation}

\newtheorem{prob}[defin]{Problem}

\newtheorem{lemma}[defin]{Lemma}

\newtheorem{corol}[defin]{Corollary}

\newtheorem{fact}[defin]{Fact}

\newtheorem{example}[defin]{Example}

\newtheorem{thm}[defin]{Theorem}

\newtheorem{clm}[defin]{Claim}

\newtheorem{subclaim}[defin]{Subclaim}

\newtheorem{rem}[defin]{Remark}

\makeatletter
\newtheorem*{rep@theorem}{\rep@title}
\newcommand{\newreptheorem}[2]{%
\newenvironment{rep#1}[1]{%
 \def\rep@title{#2 \ref{##1}}%
 \begin{rep@theorem}}%
 {\end{rep@theorem}}}
\makeatother

\newreptheorem{corollary}{Corollary}
\newreptheorem{theorem}{Theorem}

\section{Introduction}

The goal of this paper is to study a natural generalization of separability: let us call a space $X$ \emph{$e$-separable} iff $X$ has a dense set which is the union of countably many closed discrete sets. The definition is due to Kurepa \cite{kurepa}, who introduced this notion as property $K_0'$ in his study of Suslin's problem. Later, $e$-separable spaces appear in multiple papers related to the study of linearly ordered and GO-spaces \cite{faber, qiao,qiao2, watson}. In particular, Faber \cite{faber} showed that $e$-separable GO-spaces are perfect; however, the converse is famously open: is there, in ZFC, a perfect GO-space (or even just a perfect $T_3$ space) which is not $e$-separable? Let us refer the interested reader to a paper of Benett and Lutzer \cite{lutzer} for more details and results on this topic.

Now, our interest lies mainly in studying $e$-separability with regards to powers and products. Recall that the famous Hewitt--Marczewski--Pondiczery theorem \cite{engelking} states that the product of at most $\mathfrak c$ many separable spaces is again separable. What can we say about $e$-separable spaces in this matter? Historically, another generalization of separable spaces received more attention: \emph{$d$-separable spaces} i.e. spaces with $\sigma$-discrete dense sets. In Kurepa's old notation, $d$-separable spaces  were called $K_0$. $d$-separable spaces were investigated in great detail (see \cite{arh81, dD, juhi, moore2, weaksep, tkac}) and they show very interesting behaviour in many aspects, in particular, regarding products. A. Arhangel'ski\u\i\mbox{} proved in \cite{arh81} that any product of $d$-separable spaces is $d$-separable; in \cite{juhi}, the authors show that for every space $X$ there is a cardinal $\kappa$ so that $X^\kappa$ is $d$-separable.  Motivated by these results, one of our main objectives is to understand, as much as possible, the behaviour of $e$-separable spaces under products.

Our paper splits into three main parts. First, we make initial observations on $e$-separable spaces in Section \ref{prelim}. Then, in Section \ref{d_and_e}, we investigate if the existence of many large closed discrete sets suffices for a space to be $e$-separable. In particular, we prove that once an infinite power $X^\kappa$ has a closed discrete set of size $d(X^\kappa)$ (the density of $X^\kappa$) then $X^\kappa$ is $e$-separable. As a corollary, we show that certain large powers of non-countably-compact spaces are $e$-separable. Now an interesting open question is whether a countably compact, non-separable space can have an $e$-separable square.

Next, in Section \ref{sizes}, we compare two natural cardinal functions: $d(X)$, the size of the smallest dense set in $X$, with $d_e(X)$, the size of the smallest $\sigma$-closed-discrete dense set. In Theorem \ref{examp}, we show that there is a 0-dimensional space $X$ which satisfies $d(X)<d_e(X)$. We show that a similar example can be constructed for $d$-separable spaces, at least under $\aleph_1<\mathfrak c=2^{\aleph_0}$; we do not know how to remove this assumption. The section ends with a few interesting open problems.

Our main results are finally presented in Section \ref{pressec}: we describe those cardinals $\kappa$ such that the product of $\kappa$ many $e$-separable spaces is $e$-separable again, and hence present the analogue of the  Hewitt--Marczewski--Pondiczery theorem for $e$-separable spaces. First, note that $2^{\mathfrak c^+}$ is not $e$-separable (as a compact, non-separable space) and so the question of preserving $e$-separability comes down to products of at most $\mathfrak c$ terms. How could it be possible that $e$-separability is not preserved by small products? The reason must be that there are some large cardinals lurking in the background:


\begin{repcorollary}{main1cor} If the existence of a weakly compact cardinal is consistent with ZFC then so is the statement that there are less than $\mathfrak{c}$ many discrete spaces with non-$e$-separable product.
\end{repcorollary}


\begin{repcorollary}{main2cor}
If there is a non-$e$-separable product of at most $\mathfrak c$ many $e$-separable spaces then there is a weakly compact cardinal in $L$.
\end{repcorollary}

As we shall see, the proof of these results nicely combines various ideas from topology, set theory and logic.

\medskip

Throughout this paper, all spaces are assumed to be $T_1$. Given a product of discrete spaces $X=\prod \{X_\alpha:\alpha<\lambda\}$
and a function $\varepsilon$ satisfying $\dom(\varepsilon)\in[\lambda]^{<\aleph_0}$ and $\varepsilon(\alpha)\in X_\alpha$ for each $\alpha\in\dom(\varepsilon)$, we write $$[\varepsilon]=\{x\in X: \varepsilon\subseteq x\}.$$ Thus, if $x\in X$ is such that $x\uhp \dom(\varepsilon)=\varepsilon$, then $[\varepsilon]$ is a basic open neighbourhood of $x$ in $X$. We let $D(\kappa)$ denote the discrete space on a cardinal $\kappa$.

In general, we use standard notation and terminology consistent with Engelking \cite{engelking}.


\section{Preliminaries}\label{prelim}

The main concept we study in this paper is the following:

\begin{defin}
\label{e-sep}

A topological space $X$ is \emph{$e$-separable} if there is a
sequence $(D_n)_{n\in\omega}$ of closed discrete subspaces of $X$ such
that $\bigcup_{n\in\omega}D_n$ is dense in $X$.

\end{defin}

In this section, we will prove a few general facts about $e$-separable spaces and state some results for later reference. Let us start with simple observations:

\begin{obs} Every separable space is $e$-separable and every $e$-separable space is $d$-separable. 
\end{obs}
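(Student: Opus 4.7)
The two implications will be handled separately, and both are essentially immediate from the definitions; the purpose of writing out a plan is mainly to identify which hypotheses are actually used.

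For the first implication, the plan is to take a countable dense set $D=\{d_n:n\in\omega\}$ witnessing separability of $X$ and then to invoke the standing $T_1$ hypothesis to conclude that each singleton $\{d_n\}$ is closed in $X$. A singleton is trivially discrete as a subspace, so setting $D_n:=\{d_n\}$ gives a sequence of closed discrete subspaces whose union $\bigcup_{n\in\omega}D_n=D$ is dense in $X$, which is exactly Definition \ref{e-sep}.

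For the second implication, I would simply point out that a closed discrete subspace of $X$ is in particular discrete; hence a sequence $(D_n)_{n\in\omega}$ of closed discrete sets whose union is dense is automatically a sequence of discrete sets whose union is dense, so any $e$-separable space is $d$-separable.

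There is no real obstacle in either step. The only point worth highlighting is that the $T_1$ assumption on all spaces in the paper is genuinely used in the first implication, since if singletons were not closed one could not, in general, write a countable dense set as a countable union of closed discrete sets. The observation itself is stated mainly to situate $e$-separability between separability and $d$-separability, thereby motivating the product-theoretic and cardinal-invariant comparisons with both notions that drive the rest of the paper.
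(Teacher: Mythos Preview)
Your argument is correct. The paper states this observation without proof, treating both implications as immediate from the definitions, so there is nothing to compare against; your write-up supplies exactly the routine verification the paper omits, and you are right that the standing $T_1$ hypothesis is what makes singletons closed in the first implication.
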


Recall the following two well known cardinal functions: the density of $X$, denoted by $d(X)$, is the smallest possible size of a dense set in $X$. The extent of a space $X$, denoted by $e(X)$, is the supremum of all cardinalities $|E|$ where $E$ is a closed discrete subset of $X$.

\begin{obs}\label{cardobs} Every $e$-separable space $X$ satisfies $d(X)\leq e(X)$; moreover, if $cf(d(X))>\omega$ then there is a closed discrete set of size $d(X)$ in $X$.  In particular, a countably compact space is $e$-separable iff it is separable.
\end{obs}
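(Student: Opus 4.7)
My plan is to unfold the definitions and do a counting argument, then use a standard fact about countable compactness to close out.

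Fix a witness to $e$-separability: closed discrete sets $D_n$ ($n\in\omega$) with $D=\bigcup_{n\in\omega}D_n$ dense in $X$. For the inequality $d(X)\le e(X)$, I would simply observe that each $|D_n|\le e(X)$ since $D_n$ is closed discrete, so
\[
d(X)\le |D|\le \sum_{n\in\omega}|D_n|\le \omega\cdot e(X)=e(X),
\]
where the last equality uses the standard convention that $e(X)\ge\omega$ (every infinite $T_1$ space has some countable closed discrete subset or a countable dense subset, and in the latter case the result is trivial).

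For the second part, assume $\mathrm{cf}(d(X))>\omega$. The plan is to argue by contradiction: if $|D_n|<d(X)$ for every $n\in\omega$, then since a countable set of cardinals below a cardinal of uncountable cofinality is bounded, $\sup_{n\in\omega}|D_n|<d(X)$, which would force $|D|\le\omega\cdot\sup_{n\in\omega}|D_n|<d(X)$, contradicting the density of $D$. Hence some $D_n$ has size at least $d(X)$, and any subset of $D_n$ of size $d(X)$ is the desired closed discrete set (closedness and discreteness are inherited by subsets of closed discrete sets).

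For the final ``in particular,'' I would use the well-known fact that in a countably compact $T_1$ space every closed discrete subset is finite: any countably infinite closed discrete set would be an infinite subset with no accumulation point, contradicting countable compactness. Hence $e(X)=\omega$ for countably compact $X$. Combined with the first inequality, $e$-separability yields $d(X)\le\omega$, i.e.\ separability; the reverse implication is immediate from the previous observation that separable spaces are $e$-separable.

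None of the steps look like a real obstacle; the only mild subtlety is the cofinality step, where one must be careful to use $\mathrm{cf}(d(X))>\omega$ rather than mere regularity.
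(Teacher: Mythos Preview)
Your argument is correct and is precisely the routine verification the paper leaves implicit (the statement is recorded as an Observation without proof, so there is nothing further to compare against). One small blemish: the parenthetical justifying $e(X)\ge\omega$ via the dichotomy ``infinite closed discrete subset or countable dense subset'' is not a valid dichotomy---$2^{\mathfrak c^+}$, for instance, has neither---but this does no damage, since the standard convention that cardinal functions are $\ge\omega$ is what you actually need and you invoke it correctly before the parenthetical.
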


\begin{example}\label{2^c+} $2^{\mathfrak{c}^+}$ is a compact, $d$-separable but non-$e$-separable space.
\end{example}
\begin{proof} By Arhangel'ski\u\i{}'s \cite{arh81}, $d$-separability is preserved by products. Also, $2^{\mathfrak{c}^+}$ is not $e$-separable as $d(2^{\mathfrak{c}^+})>e(2^{\mathfrak{c}^+})=\oo$, hence Observation \ref{cardobs} can be applied.
\end{proof}

What can we say about metric spaces?

\begin{obs} Every space with a $\sigma$-discrete $\pi$-base is $e$-separable. Hence, every metrizable space is $e$-separable.
\end{obs}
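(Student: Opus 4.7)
The plan is to build the required dense $\sigma$-closed-discrete set by choosing one point from each element of the $\pi$-base. Let $\mathcal{B} = \bigcup_{n\in\omega} \mathcal{B}_n$ be the given $\sigma$-discrete $\pi$-base, where each family $\mathcal{B}_n$ consists of nonempty open sets and is discrete in $X$ (meaning every point of $X$ has a neighbourhood meeting at most one member of $\mathcal{B}_n$). For each $B \in \mathcal{B}$, fix a selector $x_B \in B$, and put $D_n = \{x_B : B \in \mathcal{B}_n\}$ and $D = \bigcup_{n\in\omega} D_n$.

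The first step is to verify that each $D_n$ is closed and discrete. This is where the discreteness of the family $\mathcal{B}_n$ is used: for any $x\in X$ there is an open neighbourhood $U_x$ meeting at most one $B \in \mathcal{B}_n$, hence containing at most one point of $D_n$. When $x \in D_n$ this witnesses discreteness of $D_n$ as a subspace, and when $x \notin D_n$ it witnesses that $x$ is not in the closure of $D_n$, so $D_n$ is also closed in $X$.

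The density of $D$ is immediate from the $\pi$-base property: any nonempty open $V \subseteq X$ contains some $B \in \mathcal{B}$, and hence contains $x_B \in D$. Thus $D$ is a $\sigma$-closed-discrete dense subset, so $X$ is $e$-separable.

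For the ``hence'' clause, I would simply invoke the Bing--Nagata--Smirnov metrization theorem: every metrizable space admits a $\sigma$-discrete open base, which is in particular a $\sigma$-discrete $\pi$-base, so the first half applies. There is no real obstacle in this observation; it is essentially an unpacking of definitions, with Bing--Nagata--Smirnov as the only nontrivial ingredient for the metrizable consequence.
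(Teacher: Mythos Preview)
Your argument is correct and is the standard one. The paper itself states this observation without proof, so there is nothing to compare against; your write-up is exactly the routine verification the authors are implicitly leaving to the reader. One minor terminological point: the relevant ingredient for the metrizable case is Bing's theorem that every metrizable space has a $\sigma$-discrete base (the Nagata--Smirnov version gives a $\sigma$-locally-finite base), but this does not affect the validity of your argument.
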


The following result shows that actually a large class of generalized metric spaces are $e$-separable:

\begin{prop}
\label{developD}

Every developable space is $e$-separable.

\end{prop}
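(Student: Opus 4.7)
The plan is to fix a development $(\mathcal{U}_n)_{n\in\oo}$ for $X$ and, for each $n$, extract a closed discrete $D_n\subseteq X$ via Zorn's lemma so that $\bigcup_n D_n$ ends up dense. Call a set $D\subseteq X$ \emph{$\mathcal{U}_n$-separated} if no $U\in\mathcal{U}_n$ contains two distinct points of $D$, and take $D_n$ to be maximal among $\mathcal{U}_n$-separated subsets of $X$. This is the natural topological analogue of ``maximal $1/n$-separated set'' from the classical metric argument.

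To see that $D_n$ is closed discrete, I would fix $x\in X$ and choose any $U\in\mathcal{U}_n$ with $x\in U$ (possible since $\mathcal{U}_n$ covers $X$); by the development property there is an $m$ with $\mathrm{st}(x,\mathcal{U}_m)\subseteq U$. Now $\mathrm{st}(x,\mathcal{U}_m)$ is an open neighborhood of $x$ sitting inside $U$, and $|U\cap D_n|\leq 1$ by $\mathcal{U}_n$-separation; using $T_1$-ness to delete the at-most-one stray point of $D_n$ in this star (if it is not $x$ itself) produces a neighborhood meeting $D_n$ in at most $\{x\}$, which is enough to conclude that $D_n$ is closed discrete.

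For density, given a nonempty open $W$ and $x\in W$, use the development to pick $n$ with $\mathrm{st}(x,\mathcal{U}_n)\subseteq W$. If $x\notin D_n$, then by maximality $D_n\cup\{x\}$ fails to be $\mathcal{U}_n$-separated, so there exist $d\in D_n$ and $U\in\mathcal{U}_n$ with $\{x,d\}\subseteq U$; but then $d\in U\subseteq \mathrm{st}(x,\mathcal{U}_n)\subseteq W$, so $W\cap D_n\neq\empt$. I do not expect a genuine obstacle here: the development axiom is tailor-made for both the closed-discreteness step (finding an arbitrarily small star of $x$ inside a prescribed $U\in\mathcal{U}_n$) and the density step (finding an $n$ whose stars at $x$ land inside $W$); the only minor care point is the $T_1$ cleanup that promotes ``at most one point of $D_n$ in the neighborhood'' into actual closed discreteness.
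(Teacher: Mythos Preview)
Your argument is correct and is essentially the paper's approach made self-contained: the paper invokes Proposition~1.3 of \cite{leandromD} to produce, for each $n$, a closed discrete $D_n$ whose stars $\mathrm{st}(x,\mathscr{G}_n)$ cover $X$, and that proposition is proved precisely by taking a maximal $\mathcal{U}_n$-separated set as you do; the density step via star-symmetry is then identical in both write-ups. One small simplification: in your closed-discreteness argument the detour through $\mathrm{st}(x,\mathcal{U}_m)$ is unnecessary, since the chosen $U\in\mathcal{U}_n$ with $x\in U$ is already an open neighborhood satisfying $|U\cap D_n|\le 1$, so the $T_1$ cleanup applies directly to $U$.
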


Recall that a space $X$ is \emph{developable} iff there is a developement of $X$, i.e. a sequence $(\mathscr{G}_n)_{n\in\omega}$ of open covers of $X$ such that for every $x\in X$ and open $V$ containing $x$ there is an $n\in \omega$ so that $st(x,\mathscr{G}_n)=\bigcup\{U\in \mathscr G_n:x\in U\}\subseteq V$.

\begin{proof}

Let $(\mathscr{G}_n)_{n\in\omega}$ be a development for a topological
space $X$. For each $x\in X$ and $n\in\omega$, let
$V^x_n=st(x,\mathscr{G}_n)$. By Proposition 1.3 of \cite{leandromD},
there is  a closed discrete $D_n\subseteq X$ such
that $X=\bigcup_{x\in D_n}V^x_n$ for each $n\in\omega$. We claim that $\bigcup_{n\in\omega}D_n$
is dense in $X$.

Suppose, to the contrary, that there is $p\in
X\setminus\overline{\bigcup_{n\in\omega}D_n}$, and let $m\in\omega$ be
such that $V^p_m\cap\bigcup_{n\in\omega}D_n=\emptyset$. By the choice
of $D_m$, there is $x\in D_m$ such that $p\in V^x_m$; but then
$\{p,x\}\subseteq U$ for some $U\in\mathscr{G}_m$, which implies that
$x\in V^p_m$, thus contradicting the fact that $V^p_m\cap
D_m=\emptyset$.
\end{proof}

It is worth comparing the above result with Proposition 2.3 of
\cite{dD}, which states that every quasi-developable space is
$d$-separable. Note also that the Michael line is a quasi-developable space
(see \cite{bennett}) that is not $e$-separable.

The proof of Proposition \ref{developD} suggests that there might be a connection between $D$-spaces and $e$-separable spaces; recall that a space $X$ is a $D$-space iff for every open neighbourhood assignment $N:X\to \tau$ there is a closed discrete $D\subseteq X$ so that $N''D$ covers $X$. However, we note that the Alexandrov double circle is hereditarily $D$ (see e.g. \cite[Proposition 2.5]{gruen}) but not $e$-separable.\\

For later reference, we would like to state two results on the existence of closed discrete sets in products.

\begin{thm}[\L o\'s \cite{los}, Gorelic \cite{gorelic}]\label{gorelic} $D(\omega)^{2^\kappa}$ contains a closed discrete set of size $\kappa$ for every $\kappa$ less than the first measurable cardinal. 
\end{thm}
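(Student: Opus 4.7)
The plan is to work in $D(\omega)^{\omega^\kappa}$ (using $|\omega^\kappa|=2^\kappa$), so that coordinates are indexed by functions $f\colon\kappa\to\omega$. For each $\alpha<\kappa$ I define $x_\alpha\in D(\omega)^{\omega^\kappa}$ by $x_\alpha(f)=f(\alpha)$, and aim to prove that $E=\{x_\alpha:\alpha<\kappa\}$ is closed and discrete. Discreteness is immediate: writing $\chi_{\{\alpha\}}$ for the characteristic function of $\{\alpha\}\subseteq\kappa$, the basic open set $[\{(\chi_{\{\alpha\}},1)\}]$ isolates $x_\alpha$ in $E$, since $\chi_{\{\alpha\}}(\alpha)=1$ and $\chi_{\{\alpha\}}(\beta)=0$ for $\beta\neq\alpha$.

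The real work is closedness. Suppose $y\in D(\omega)^{\omega^\kappa}$ is an accumulation point of $E$, and for each $f\in\omega^\kappa$ set $A_f^y=\{\alpha<\kappa:f(\alpha)=y(f)\}$. The limit-point hypothesis translates exactly into the assertion that $\{A_f^y:f\in\omega^\kappa\}$ has the finite intersection property. I will show that this family generates an $\omega_1$-complete ultrafilter $\mathcal{U}_y$ on $\kappa$. The assumption $\kappa<$ first measurable, together with the classical Tarski--Ulam observation that the least cardinal carrying a non-principal $\omega_1$-complete ultrafilter is already measurable, then forces $\mathcal{U}_y$ to be principal on some $\alpha_0<\kappa$. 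But principality at $\alpha_0$ says $\alpha_0\in A_f^y$, i.e.\ $f(\alpha_0)=y(f)$, for every $f\in\omega^\kappa$, so $y=x_{\alpha_0}\in E$.

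Two calculations produce $\mathcal{U}_y$. Evaluating at characteristic functions $\chi_A$ shows that $y(\chi_A)\in\{0,1\}$ (else $A_{\chi_A}^y=\emptyset$ contradicts FIP), and thus $A_{\chi_A}^y\in\{A,\kappa\setminus A\}$, giving the ultrafilter property. The countable-completeness step is where the real content lies and is the main obstacle: given $f_n\in\omega^\kappa$ for $n<\omega$, I encode the simultaneous failures into a single function by setting $f(\alpha)=0$ if $\alpha\in\bigcap_n A_{f_n}^y$, and $f(\alpha)=m+1$ for the least $m$ with $\alpha\notin A_{f_m}^y$ otherwise. If $y(f)=m+1$ for some $m$, then $A_f^y\subseteq\kappa\setminus A_{f_m}^y$, contradicting FIP on $\{f,f_m\}$; hence $y(f)=0$, so $A_f^y=\bigcap_n A_{f_n}^y$ is itself a generator of $\mathcal{U}_y$. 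Finding this encoding is the one creative step of the argument; everything else is routine bookkeeping.
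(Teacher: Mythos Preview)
The paper does not prove this theorem; it is quoted from the literature with references to \L o\'s and Gorelic, so there is no ``paper's own proof'' to compare against. Your argument is correct and is in fact essentially the classical one: code $2^\kappa$ by ${}^\kappa\omega$, take the evaluation maps $x_\alpha(f)=f(\alpha)$, and show that any accumulation point $y$ of $E=\{x_\alpha:\alpha<\kappa\}$ induces a $\sigma$-complete ultrafilter on $\kappa$ via $f\mapsto A_f^y=\{\alpha:f(\alpha)=y(f)\}$, which must be principal below the first measurable. One cosmetic remark: your phrasing concludes ``so $y=x_{\alpha_0}\in E$'', which on its face only gives that $E$ is closed; to get \emph{closed discrete} you should add one sentence noting that this contradicts the discreteness you already proved (the isolating neighbourhood $[\{(\chi_{\{\alpha_0\}},1)\}]$ shows $x_{\alpha_0}$ is not an accumulation point of $E$), hence $E$ has no accumulation points at all.
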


The above result was first proved by \L o\'s \cite{los} but the reference \cite{gorelic} is more accessible.

\begin{thm}[Mycielski \cite{myc}]\label{myc} $D(\omega)^{\kappa}$ contains a closed discrete set of size $\kappa$ for every $\kappa$ less than the first weakly inaccessible cardinal. 
\end{thm}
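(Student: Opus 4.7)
The plan is to proceed by transfinite induction on $\kappa<\kappa_0$, where $\kappa_0$ is the first weakly inaccessible cardinal. The base case $\kappa\le\omega$ is straightforward: the sequence $\{\delta_n:n<\omega\}\subseteq D(\omega)^\omega$ given by $\delta_n(0)=n$ and $\delta_n(k)=0$ for $k\geq 1$ is closed discrete of size $\omega$. Since $\kappa<\kappa_0$, any uncountable $\kappa$ treated in the inductive step is either a successor or a singular limit.

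For the singular step, write $\kappa=\sup_{i<\mu}\kappa_i$ with $\mu=\mathrm{cf}(\kappa)<\kappa$ and each $\kappa_i<\kappa$. The inductive hypothesis gives a closed discrete set $\{e_i:i<\mu\}\subseteq D(\omega)^\mu$ and, for each $i$, a closed discrete set $\{d^i_\xi:\xi<\kappa_i\}\subseteq D(\omega)^{\kappa_i}$. Partition the index set as $\kappa=J\sqcup\bigsqcup_{i<\mu}I_i$ with $|J|=\mu$ and $|I_i|=\kappa_i$. For each pair $(i,\xi)$ define $x^i_\xi\in D(\omega)^\kappa$ whose $J$-block equals $e_i$ (under a fixed bijection $J\leftrightarrow\mu$), whose $I_i$-block equals $d^i_\xi$, and which is identically zero on $I_j$ for $j\neq i$. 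Closed discreteness follows from a two-step isolation: given $z\in D(\omega)^\kappa$, a finite $F_J\subseteq J$ provided by closed discreteness of $\{e_i\}$ pins down at most one $i_0$, and then a finite $F_{I_{i_0}}\subseteq I_{i_0}$ provided by closed discreteness of $\{d^{i_0}_\xi\}$ pins down at most one $\xi$. The $J$-block plays the role of a distinguishing ``tag.''

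For the successor step $\kappa=\lambda^+$, use the inductive hypothesis on $\lambda$ to get a closed discrete set $\{e_\xi:\xi<\lambda\}\subseteq D(\omega)^\lambda$, and fix an Ulam matrix $\{U^\xi_\eta:\xi<\lambda,\eta<\lambda^+\}$, so that for each $\eta$ the sets $\{U^\xi_\eta:\xi<\lambda\}$ partition $\lambda^+\setminus(\eta+1)$ and for each $\xi$ the sets $\{U^\xi_\eta:\eta<\lambda^+\}$ are pairwise disjoint. For $\alpha<\lambda^+$ and $\eta<\alpha$ let $\xi(\alpha,\eta)$ be the unique $\xi$ with $\alpha\in U^\xi_\eta$. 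Identifying $\lambda^+\times\lambda$ with $\lambda^+$, define $x_\alpha\in D(\omega)^{\lambda^+\times\lambda}$ by $x_\alpha(\eta,k)=e_{\xi(\alpha,\eta)}(k)$ for $\eta<\alpha$ and by some fixed auxiliary values for $\eta\geq\alpha$. Given $z$, for each $\eta$ choose a finite $F_\eta\subseteq\lambda$ isolating a unique $\xi^*(\eta)\in\lambda\cup\{\ast\}$ with $e_{\xi^*(\eta)}\upharpoonright F_\eta=z(\eta,\cdot)\upharpoonright F_\eta$; since $\eta\mapsto\xi^*(\eta)$ maps $\lambda^+$ into $\lambda\cup\{\ast\}$, pigeonhole yields two distinct $\eta_1,\eta_2$ with the same value, and the row-disjointness of the Ulam matrix forces $\bigcap_i U^{\xi^*}_{\eta_i}=\emptyset$, eliminating all large $\alpha$.

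The main obstacle is the successor case, in particular the final step of pushing the count of matching $\alpha$'s below $\max(\eta_1,\eta_2)$ down from $|\max(\eta_1,\eta_2)|\leq\lambda$ to at most one. This ``tail'' must be killed by enlarging the finite witnessing set $F$ with additional coordinates that exploit either the auxiliary values chosen for $\eta\geq\alpha$ or a second application of the Ulam/IH machinery below $\max(\eta_1,\eta_2)$; arranging the initial construction (especially the choice of the auxiliary point and the identification $\lambda^+\times\lambda\cong\lambda^+$) so that this clean-up step goes through is the delicate part of the proof and is exactly where the hypothesis ``$\lambda<\kappa_0$'' must be propagated via the inductive hypothesis on $\lambda$.
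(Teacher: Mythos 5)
The paper does not prove this theorem --- it is quoted from Mycielski --- so I can only judge your argument on its own terms. Your induction skeleton is the right one (the class of $\kappa$ for which the conclusion holds contains $\omega$ and is closed under successors and singular suprema, which exhausts everything below the first weakly inaccessible), and your singular step is correct and complete: the $J$-block tag pins down $i_0$ and the $I_{i_0}$-block then pins down $\xi$, so two finite restrictions isolate a point.

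The successor step, however, contains a genuine gap, which you yourself flag. Two problems. First, your pigeonhole is applied to a map into $\lambda\cup\{\ast\}$, so the repeated value may be $\ast$, in which case the row-disjointness of the Ulam matrix gives you nothing; the $\ast$ case must be treated separately. Second, and more seriously, in either case your argument only excludes the $x_\alpha$ with $\alpha>\max(\eta_1,\eta_2)$, leaving up to $\lambda$ many candidates in every neighbourhood of $z$ built so far, and the ``fixed auxiliary values for $\eta\ge\alpha$'' give you no leverage to remove them: all those surviving $x_\alpha$ agree with one another on the upper blocks, so no finite enlargement of $F$ of the kind you describe can separate them. This tail is exactly the content of the successor step, not a clean-up detail. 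The standard repair is to make the construction symmetric: with injections $g_\alpha:\alpha\to\lambda$, set the $\eta$-th block of $x_\alpha$ equal to $e_{g_{\max(\alpha,\eta)}(\min(\alpha,\eta))}$ for $\eta\neq\alpha$ (and, say, $e_0$ on the diagonal), so that \emph{every} block of every $x_\alpha$ lies in the closed discrete set $E$. Then the $\ast$ case excludes all $x_\alpha$ at once, and in the non-$\ast$ case with common value $\xi$ the injectivity of $g_\alpha$, $g_{\eta_1}$ and $g_{\eta_2}$ kills, respectively, all $\alpha>\eta_2$, all but one $\alpha<\eta_1$, and all but one $\alpha$ with $\eta_1<\alpha<\eta_2$, leaving finitely many points, which suffices in a $T_1$ space. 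Until the tail is actually eliminated by some such device, the successor case --- and hence the theorem --- is not proved.
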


\section{Density and extent for $e$-separable spaces}\label{d_and_e}

Our goal now is to elaborate further on the observation that if $X$ is $e$-separable then $d(X)\leq e(X)$. In particular, in what context is the implication reversible? 

First, note that $d(X)\leq e(X)$  does not imply that there are closed discrete sets of size $d(X)$:

\begin{example} There is a $\sigma$-closed-discrete (hence $e$-separable) space $X$ which contains no closed discrete sets of size $d(X)$.
\end{example}
\begin{proof} Let $X=\oo_\oo+1$ and declare all points in $\oo_\oo$ isolated and let $\{\{\oo_\oo\}\cup A:A\in [\oo_\oo]^{<\aleph_\oo}\}$ form a neighbourhood base at $\oo_\oo$. 
\end{proof}

Next, we show that even a significant strengthening of $d(X)\leq e(X)$ fails to imply $e$-separability in general:


\begin{example} There is a 0-dimensional space $X$ such that $|X|=\omega_1$, every somewhere dense subset of $X$ contains a closed discrete subset of size $\omega_1$, while $X$ is not $e$-separable.
\end{example}
\begin{proof} Let $X=\omega_1\mbox{}^{<\omega}$ and declare $U\subseteq X$ to be open iff $x\in U$ implies that $\{\alpha<\omega_1:x\smf (\alpha)\in U\}$ contains a club. Now, $X$ is a Hausdorff, 0-dimensional and dense-in-itself space.

\begin{obs}
 A set $E\subseteq X$ is closed discrete iff $\{\alpha<\omega_1:x\smf \alpha\in E\}$ is non-stationary for every $x\in X$.
\end{obs}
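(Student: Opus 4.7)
The plan is to verify both directions directly from the definition of the topology on $X=\omg^{<\omega}$, unpacking what ``closed'' and ``discrete'' mean in terms of the club/non-stationary condition defining the open sets.

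For the forward direction, assume $E$ is closed discrete and fix $x\in X$. I split on whether $x\in E$ or not. If $x\notin E$, then $x\in X\setm E$ and $X\setm E$ is open, so by definition of the topology $\{\alpha<\omg: x\smf(\alpha)\in X\setm E\}$ contains a club, which makes its complement $\{\alpha<\omg:x\smf(\alpha)\in E\}$ non-stationary. If $x\in E$, pick an open $U\ni x$ witnessing discreteness, i.e.\ with $U\cap E=\{x\}$; then $\{\alpha:x\smf(\alpha)\in U\}$ contains a club, and for every such $\alpha$ we have $x\smf(\alpha)\neq x$, hence $x\smf(\alpha)\notin E$. So again $\{\alpha:x\smf(\alpha)\in E\}$ is non-stationary.

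For the backward direction, assume that $\{\alpha<\omg:x\smf(\alpha)\in E\}$ is non-stationary for every $x\in X$. I first check that $X\setm E$ is open: for any $z\in X\setm E$, the complement of $\{\alpha:z\smf(\alpha)\in E\}$ is a club-containing set, which is exactly what openness requires. So $E$ is closed. To see that $E$ is discrete, given $x\in E$ I propose $U_x=(X\setm E)\cup\{x\}$ as a witnessing neighbourhood. Clearly $U_x\cap E=\{x\}$, so I only need $U_x$ to be open. For $z\in U_x\setm\{x\}=X\setm E$, the verification is the same as above. For $z=x$, the set $\{\alpha:x\smf(\alpha)\in U_x\}$ contains $\{\alpha:x\smf(\alpha)\in X\setm E\}$, which contains a club by hypothesis. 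So $U_x$ is open.

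I do not expect a real obstacle here: the observation is essentially just the translation of the topology's definition into the language of stationary sets, and the only slightly delicate point is remembering to handle the $x\in E$ case of the forward direction by ``moving away from $x$'' using the isolating neighbourhood (rather than using the closedness of $E$, which would give the wrong information).
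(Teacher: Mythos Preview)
Your proof is correct. The paper states this observation without proof, so there is nothing to compare against; your argument is the natural direct verification from the definition of the topology.
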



This observation immediately implies that the $\sigma$-closed-discrete sets are closed discrete and hence $X$ cannot be $e$-separable.

Suppose that $Y\subseteq X$ is dense in a non-empty open set $V$; $I_x=\{\alpha\in \omg: x\smf \alpha\in Y\}$ must be stationary for any $x\in V$ and so we can select an uncountable but non-stationary $I\subseteq I_x$. Hence $\{x\smf \alpha:\alpha \in I\}$ is an uncountable closed discrete subset of $Y$.
\end{proof}

Now, let us turn to powers of a fixed space $X$. Could it be that $d(X^\kappa)\leq e(X^\kappa)$ implies that $X^\kappa$ is $e$-separable whenever $\kappa$ is an infinite cardinal? The answer is negative, at least under the assumption that there are measurable cardinals:

\begin{example}\label{ex:mble} If $\kappa$ is the first measurable cardinal, then $d(\omega^\kappa)= e(\omega^\kappa)$; however, $\omega^\kappa$ is not $e$-separable.
\end{example}
\begin{proof} It is clear that $d(\omega^\kappa)=\kappa$; also, $2^\lambda<\kappa$ whenever $\lambda<\kappa$, and so Theorem \ref{gorelic} implies that $e(\omega^\kappa)=\kappa$ as well.

If we show that $\omega^\kappa$ has no closed discrete sets of size $\kappa$ then $\omega^\kappa$ cannot be $e$-separable. Suppose that $A=\{x_\alpha:\alpha<\kappa\}\subseteq \omega^\kappa$ and that $\mc U$ is a $\sigma$-complete non-principal ultrafilter on $\kappa$. Note that $$\kappa=\bigcup_{n\in \omega}\{\alpha<\kappa:x_\alpha(\xi)=n\}$$ for each $\xi<\kappa$. So there is a unique $n\in \omega$ such that $\{\alpha<\kappa:x_\alpha(\xi)=n\}\in \mc U$. In turn, we can define $y\in \omega^\kappa$ by $y(\xi)=n$ iff $\{\alpha<\kappa:x_\alpha(\xi)=n\}\in \mc U$. It is easy to see that $\{\alpha<\kappa:x_\alpha\in V\}\in \mc U$ for every open neighbourhood $V$ of $y$, and so $V\cap A$ has size $\kappa$. Hence, $y$ is an accumulation point of $A$.
\end{proof}

However, if we suppose a bit more than $d(X^\kappa)\leq e(X^\kappa)$ then we get

\begin{thm} \label{prodthm}Let $X$ be any space and $\kappa$ an infinite cardinal. If $X^\kappa$ contains a closed discrete set of size $d(X^\kappa)$ then $X^\kappa$ is $e$-separable.
\end{thm}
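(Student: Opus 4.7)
The plan is to use that $\kappa$ is infinite, so $Y := X^\kappa$ is homeomorphic to $Y^\oo$ (because $\oo \cdot \kappa = \kappa$). This lets me view the space as a countable power and build the required $\sigma$-closed-discrete dense set by a ``shift by a closed-discrete set'' trick indexed by $\oo$. Let $\lambda = d(Y) = d(Y^\oo)$. The hypothesis provides a closed discrete $E = \{e_\alpha : \alpha < \lambda\} \subseteq Y$ and density gives a dense $D = \{d_\alpha : \alpha < \lambda\} \subseteq Y^\oo$; I take both enumerations to be injective, pairing $d_\alpha \leftrightarrow e_\alpha$. Note that each $d_\alpha$ is itself an $\oo$-sequence $(d_\alpha(k))_{k \in \oo}$ with values in $Y$. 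If $\lambda$ is finite then $Y$ is a finite $T_1$ space, hence discrete, and the conclusion is trivial; so I may assume $\lambda \geq \oo$.

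The key construction: for each $n \in \oo$ and $\alpha < \lambda$, let $x_{\alpha,n} \in Y^\oo$ be obtained from $d_\alpha$ by replacing its $n$-th coordinate with $e_\alpha$, and set $F_n = \{x_{\alpha,n} : \alpha < \lambda\}$. I claim $\{F_n : n \in \oo\}$ witnesses the $e$-separability of $Y^\oo \cong Y$.

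Density of $\bigcup_{n \in \oo} F_n$ is immediate: a basic open $U = \prod_{k \in \oo} U_k$ has $U_k = Y$ off some finite $I \subseteq \oo$; choose any $n \in \oo \setminus I$ and, by density of $D$, an $\alpha$ with $d_\alpha \in U$. Then $x_{\alpha,n}$ agrees with $d_\alpha$ at each $k \neq n$, while $x_{\alpha,n}(n) = e_\alpha \in Y = U_n$, so $x_{\alpha,n} \in F_n \cap U$. For discreteness of $F_n$, the projection $\pi_n : Y^\oo \to Y$ sends $F_n$ bijectively onto $E$, so picking an open $V_\alpha \ni e_\alpha$ in $Y$ with $V_\alpha \cap E = \{e_\alpha\}$, the preimage $\pi_n^{-1}(V_\alpha)$ isolates $x_{\alpha,n}$ within $F_n$. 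For closedness, given $y \in Y^\oo \setminus F_n$: if $\pi_n(y) \notin E$ then $\pi_n^{-1}(Y \setminus E)$ separates $y$ from $F_n$ (using that $E$ is closed); otherwise $\pi_n(y) = e_\beta$ but $y(k) \neq d_\beta(k)$ for some $k \neq n$, and then $\pi_n^{-1}(V_\beta) \cap \pi_k^{-1}(Y \setminus \{d_\beta(k)\})$ is open, contains $y$, and misses $F_n$ (using $T_1$ for $\{d_\beta(k)\}$ closed).

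The one delicate point is the closedness of each $F_n$: it relies crucially on $E$ being closed in $Y$ (not merely discrete), together with the injective pairing between the enumerations of $D$ and $E$, so that collapsing in the $n$-th coordinate to a unique $e_\beta$ simultaneously pins down every other coordinate to that of $d_\beta$. Everything else is a direct unpacking of the product topology and the density of $D$.
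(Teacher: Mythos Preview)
Your proof is correct and uses essentially the same idea as the paper: mix a dense set with a closed discrete set via coordinate-splitting, so that each piece projects bijectively onto the closed discrete set (this is exactly the paper's Observation~\ref{proj}). The packaging differs slightly: you pass through the homeomorphism $X^\kappa\cong (X^\kappa)^\omega$ and replace a single $Y$-coordinate, whereas the paper works directly in $X^\kappa$ with an increasing exhaustion $(I_n)_{n\in\omega}$ of $\kappa$ and replaces the $(\kappa\setminus I_n)$-coordinates. The paper's version (Lemma~\ref{power0}) is formulated a bit more generally so as to also preserve the density $d(D)=d(E)$, a feature used later in the paper but not needed for Theorem~\ref{prodthm} itself.
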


The above theorem is an analogue of \cite[Theorem 1]{juhi}: if $X^\kappa$ has a discrete subspace of size $d(X)$ then $X^\kappa$ is $d$-separable. Example \ref{ex:mble} shows that assuming  ``$X^\kappa$ contains a closed discrete set of size $d(X)$'' does not imply that $X$ is $e$-separable.

We will prove a somewhat technical lemma now which immediately implies Theorem \ref{prodthm} and will be of use later as well:

\begin{lemma}\label{power0} Let $X$ be any space and $\kappa$ an infinite cardinal. Suppose that $D\subseteq X^\kappa$ is dense in $X^\kappa$ and $X^\kappa$ contains a closed discrete set of size $|D|$. Then there is a  dense set $E$ in $X^\kappa$ such that 
\begin{enumerate}
	\item $|D|=|E|$, $d(D)=d(E)$, and
	\item $E$ is $\sigma$-closed-discrete.
\end{enumerate}
\end{lemma}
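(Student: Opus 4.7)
The plan is to mimic the construction underlying \cite[Theorem 1]{juhi}, which produces a $\sigma$-discrete dense set from a discrete subspace of size $d(X)$, but to arrange each piece to be \emph{closed} discrete rather than merely discrete. Set $\lambda = |D|$ and fix a partition $\kappa = \bigsqcup_{n<\oo} A_n$ with $|A_n| = \kappa$ for each $n$. The hypothesis transfers via the homeomorphism $X^{A_n} \cong X^\kappa$ to give a closed discrete set $F_n = \{f^n_\alpha : \alpha < \lambda\} \subseteq X^{A_n}$. Enumerating $D = \{d_\alpha : \alpha < \lambda\}$, define $e^n_\alpha \in X^\kappa$ by $e^n_\alpha \uhp A_n = f^n_\alpha$ and $e^n_\alpha \uhp (\kappa \setm A_n) = d_\alpha \uhp (\kappa \setm A_n)$, set $E_n = \{e^n_\alpha : \alpha < \lambda\}$, and let $E = \bigcup_{n<\oo} E_n$.

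The routine checks are that $|E| = \lambda$ and that each $E_n$ is closed discrete in $X^\kappa$: any basic neighborhood in $X^{A_n}$ separating $F_n$ lifts to an open set in $X^\kappa$ separating $E_n$, since $e^n_\alpha \uhp A_n = f^n_\alpha$. For density of $E$, note that any basic open $V = [\varepsilon]$ has finite $\dom(\varepsilon)$, so all but finitely many $n$ satisfy $\dom(\varepsilon) \cap A_n = \emptyset$; for such an $n$, $e^n_\alpha \in V$ iff $d_\alpha \in V$, and density of $D$ supplies the point. Re-running this argument with a dense $Y \subseteq D$ of size $d(D)$ in place of $D$ shows that $\{e^n_\alpha : d_\alpha \in Y,\, n<\oo\}$ is dense in $X^\kappa$ (hence in $E$), which gives $d(E) \leq d(D)$.

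The main obstacle is the reverse inequality $d(D) \leq d(E)$. Given a dense $Z \subseteq E$, I would set $Y = \{d_\alpha : e^n_\alpha \in Z \text{ for some } n\} \subseteq D$; clearly $|Y| \leq |Z|$. To check that $Y$ is dense in $D$, fix a basic open $V = [\varepsilon]$ meeting $D$ and let $B = \{m : A_m \cap \dom(\varepsilon) \neq \emptyset\}$, which is finite. We may assume $|X| \geq 2$ (else the lemma is trivial), so $X^\kappa$ has no isolated points and consequently every closed discrete $E_m$ is nowhere dense. Hence $V \setm \bigcup_{m \in B} E_m$ is nonempty open; it meets the dense set $E$, and therefore meets $Z$, yielding some $z = e^m_\alpha \in Z$ with $m \notin B$. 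Since $\dom(\varepsilon) \subseteq \kappa \setm A_m$, the construction of $e^m_\alpha$ forces $d_\alpha \uhp \dom(\varepsilon) = e^m_\alpha \uhp \dom(\varepsilon) = \varepsilon$, so $d_\alpha \in V \cap Y$. Pruning the finitely many ``bad'' $E_m$'s out of $V$ via nowhere-denseness is precisely what prevents the witness in $Z \cap V$ from landing in an $E_m$ whose $A_m$ interferes with $\dom(\varepsilon)$.
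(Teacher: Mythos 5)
Your proof is correct and follows essentially the same route as the paper's: the paper uses an increasing sequence $I_n\nearrow\kappa$ (copying $D$ on $I_n$ and grafting a closed discrete set onto $\kappa\setminus I_n$) where you use a partition into blocks $A_n$, and it proves $d(D)\le d(E)$ contrapositively where you argue directly via a pushed-forward dense set. Both versions rest on the same two observations --- membership in a basic open set reflects between $e^n_\alpha$ and $d_\alpha$ for all but finitely many $n$, and the finitely many exceptional closed discrete pieces are negligible (nowhere dense) --- so there is no gap.
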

\begin{proof}
Pick a countable increasing sequence $(I_n)_{n\in\omega}$ of subsets of $\kappa$ such that $\kappa=|I_n|=|\kappa\setminus I_n|$ for each $n\in\omega$ and $\kappa=\bigcup_{n\in\omega}I_n$. Fix closed discrete sets $E_n$ of size $|D|$ in $X^{\kappa\setminus I_n}$ and  bijections $\varphi_n:D\rightarrow E_n$ for each $n\in \omega$. 

We define maps $\psi_n:D\to X^\kappa$ by 

 \[
    \psi_n(d)(\xi) = \begin{cases}
		     d(\xi), & \text{for } \xi\in I_n, and\\
        \varphi_n(d)(\xi), & \text{for } \xi\in\kappa\setm I_n.
   
        \end{cases}
  \]


 Let $E=\bigcup_{n\in\omega}\ran(\psi_n)$. 
  Clearly $|D|=|E|$ holds.
 
  It is easy to see that $E$ is dense in $X^\kappa$: if $[\varepsilon]$ is a basic open set in $X^\kappa$ then there is an $n\in \omega$ such that $\dom(\varepsilon)\subseteq I_n$, hence $\ran(\psi_n)\cap [\varepsilon]\neq\emptyset$. Next we show (2) by proving that $\ran(\psi_n)$ is closed discrete as well for each $n\in\omega$. Pick any $x\in X^\kappa$. There is a basic open set $[\varepsilon]$ in $X^{\kappa\setminus I_n}$ such that $x\upharpoonright _{\kappa\setminus I_n} \in [\varepsilon]$ and $|[\varepsilon] \cap E_n|\leq 1$.
  Thus the basic open set $\{y\in X^\kappa:\varepsilon\subseteq y\}$ of $X^\kappa$, which we (by abuse of notation) also denote by $[\varepsilon]$, satisfies $x\in [\varepsilon]$ and $|[\varepsilon] \cap \ran(\psi_n)|\leq 1$.
 
 Finally we prove $d(D)=d(E)$. Note that if $D_0$ is dense in $D$ then $\bigcup_{n\in\omega}\psi_n\mbox{}''D_0$ is dense in $E$, hence $d(E)\leq d(D)$. Suppose that $A\in [E]^{<d(D)}$; we want to prove that $A$ is not dense in $E$. If $A$ is finite, there is nothing to prove. If $A$ is infinite, let $$D_A=\bigcup_{n\in\omega}\{d\in D: \psi_n(d)\in A\};$$ then $D_A$ cannot be dense in $D$ as $|D_A|\leq |A|<d(D)$. 

Fix a basic open set $U=[\varepsilon]$ such that $[\varepsilon]\cap D_A =\emptyset$. There is an $n^*\in\omega$ such that $\dom(\varepsilon) \subseteq I_{n^*}$. 
 \begin{clm} If $m\geq n^*$ then $[\varepsilon]\cap \{\psi_m(d):d\in D_A\}=\emptyset$.
 \end{clm}
 \begin{proof} Suppose that $m\geq n^*$ and $d\in D_A$. Then $d\upharpoonright I_m=\psi_m(d) \upharpoonright I_m$, $d\notin [\varepsilon]$ and $\dom(\varepsilon)\subseteq I_m$, thus $\psi_m(d)\notin [\varepsilon]$.
 \end{proof}
Hence $$U\cap \bigcup_{n\in\omega}{\psi_n(D_A)}\subseteq \bigcup_{n<n^*}{\psi_n(D_A)},$$ that is,  $U\cap \bigcup_{n\in\omega}{\psi_n(D_A)}$ is closed discrete as each ${\psi_n(D_A)}$ is closed discrete. However, $A\subseteq \bigcup_{n\in\omega}{\psi_n(D_A)}$ which shows that $A\cap U$ cannot be dense in $U$.
\end{proof}

Let us present two corollaries. The aforementioned \cite[Theorem 1]{juhi} implies that $X^{\kappa}$ is always $d$-separable for any $\kappa\ge d(X)$. We know that, say, $[0,1]^\kappa$ is not $e$-separable when $\kappa\ge \mathfrak c^+$ because of Observation \ref{cardobs}; indeed, $[0,1]^{\kappa}$ is compact so $\sigma$-closed-discrete sets are countable, but $[0,1]^{\kappa}$ is not separable. However, the following holds:

\begin{corol}\label{prodcor}
  Suppose that $X$ is not countably compact. Then $X^{d(X)}$ is $e$-separable if $d(X)$ is less than the first weakly inaccessible cardinal. 
\end{corol}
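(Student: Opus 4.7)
The plan is to reduce the statement to Theorem \ref{prodthm} by producing, with $\kappa = d(X)$, a closed discrete subset of $X^\kappa$ of size $d(X^\kappa)$.

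First, I would pin down the value of $d(X^\kappa)$. The projection onto any factor shows $d(X^\kappa)\ge d(X)=\kappa$. For the upper bound, each factor has density $\le \kappa$, and since $\kappa\le 2^\kappa$, the Hewitt--Marczewski--Pondiczery theorem gives $d(X^\kappa)\le \kappa$. Hence $d(X^\kappa)=\kappa$.

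Next, I would exhibit a closed discrete set of size $\kappa$ in $X^\kappa$. Since $X$ is not countably compact, it contains an infinite closed discrete subspace, i.e.\ a closed copy of $D(\omega)$ sits inside $X$. Taking $\kappa$-th powers, $D(\omega)^\kappa$ embeds as a closed subspace of $X^\kappa$. Because $\kappa=d(X)$ is, by hypothesis, less than the first weakly inaccessible cardinal, Mycielski's Theorem \ref{myc} supplies a closed discrete subset of $D(\omega)^\kappa$ of size $\kappa$. Any set that is closed and discrete in a closed subspace remains closed and discrete in the ambient space, so this produces the desired closed discrete set of size $\kappa=d(X^\kappa)$ in $X^\kappa$.

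With both ingredients in hand, Theorem \ref{prodthm} applies directly and yields that $X^\kappa$ is $e$-separable. There is no real obstacle here: the argument is essentially a bookkeeping exercise assembling (i) the HMP density bound, (ii) the closed embedding $D(\omega)^\kappa\hookrightarrow X^\kappa$ guaranteed by failure of countable compactness, and (iii) Mycielski's theorem on closed discrete subsets of Cantor-like powers, which is exactly where the hypothesis ``less than the first weakly inaccessible cardinal'' is used.
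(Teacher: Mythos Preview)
Your proposal is correct and follows essentially the same approach as the paper: reduce to Theorem~\ref{prodthm} by showing $d(X^\kappa)=\kappa$, embed $D(\omega)^\kappa$ as a closed subspace via the closed discrete set coming from non-countable-compactness, and invoke Mycielski's Theorem~\ref{myc}. The only difference is that you spell out the computation $d(X^\kappa)=\kappa$ via projections and the Hewitt--Marczewski--Pondiczery theorem, whereas the paper simply asserts it.
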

\begin{proof}
 Let $\kappa=d(X)$ and note that it suffices to find a closed discrete subspace of $X^\kappa$ of size $d(X^\kappa)$ by Theorem \ref{prodthm}. First, note that $d(X^\kappa)=\kappa$. Second, $X$ contains an infinite closed discrete subspace $Y$ since $X$ is not countably compact. So $Y^\kappa$ is a closed copy of $D(\omega)^\kappa$ in $X^\kappa$. Finally,  $D(\omega)^\kappa$ does contain a closed discrete set of size $\kappa$ by Theorem \ref{myc}.
\end{proof}

\begin{corol}
  Suppose that $X$ is not countably compact. Then $X^{2^{d(X)}}$ is $e$-separable if $d(X)$ is less than the first measurable cardinal. 
\end{corol}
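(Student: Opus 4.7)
The plan is to mimic the proof of Corollary \ref{prodcor}, replacing the use of Mycielski's theorem (Theorem \ref{myc}) with the \L o\'s--Gorelic theorem (Theorem \ref{gorelic}), since the latter is exactly what lets us pass from $\kappa$ to $2^\kappa$ in the exponent. So the whole argument reduces to verifying the two hypotheses of Theorem \ref{prodthm} for the space $X^{2^\kappa}$, where $\kappa = d(X)$.

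First, I would compute the density of $X^{2^\kappa}$. Since $X$ is a continuous image of $X^{2^\kappa}$, we certainly have $d(X^{2^\kappa}) \ge d(X) = \kappa$. On the other hand, the Hewitt--Marczewski--Pondiczery theorem (applied to $2^\kappa$ many copies of $X$, each of density $\le \kappa$) yields $d(X^{2^\kappa}) \le \kappa$. Hence $d(X^{2^\kappa}) = \kappa$.

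Next I would produce a closed discrete set of size $\kappa$ in $X^{2^\kappa}$. Because $X$ is not countably compact, it contains an infinite closed discrete subspace $Y$, which is homeomorphic to $D(\omega)$. Then $Y^{2^\kappa}$ is a closed copy of $D(\omega)^{2^\kappa}$ sitting inside $X^{2^\kappa}$. Since $\kappa$ is less than the first measurable cardinal, Theorem \ref{gorelic} gives a closed discrete subset of $D(\omega)^{2^\kappa}$ of cardinality $\kappa$, and this set remains closed discrete in the ambient product $X^{2^\kappa}$.

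Having verified that $X^{2^\kappa}$ has a closed discrete set of the same size as its density, Theorem \ref{prodthm} immediately yields that $X^{2^\kappa}$ is $e$-separable. There is no real obstacle here: the content of the statement lies entirely in Theorem \ref{gorelic} and Theorem \ref{prodthm}, which were established earlier; the corollary is just the assembly of these ingredients.
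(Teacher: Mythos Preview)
Your proof is correct and follows exactly the approach the paper takes: the paper's own proof is a single sentence stating that one mimics Corollary~\ref{prodcor} but applies Theorem~\ref{gorelic} in place of Theorem~\ref{myc}, which is precisely what you do. Your version is simply more detailed, in particular making explicit the Hewitt--Marczewski--Pondiczery computation $d(X^{2^\kappa})=\kappa$ that the paper leaves implicit.
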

\begin{proof} The proof is the same as for Corollary \ref{prodcor} but now applying Theorem \ref{gorelic}.
\end{proof}

Interestingly, if $X$ is compact Hausdorff then $X^\omega$ is $d$-separable already (see \cite[Corollary 5]{juhi}). Furthermore, Moore \cite{moore2} showed that there is an $L$-space $X$ such that $X^2$ is $d$-separable. Note that $X$ itself is not $d$-separable since each discrete subspace of $X$ is countable but $X$ has uncountable density. Moore's example was improved by Peng \cite{peng}: there is an  $L$-space $X$ such that $X^2$ is $e$-separable. We wonder if the following related question is true:

\begin{prob}
   Is there a non-separable, countably compact $X$ so that $X^2$ is $e$-separable?
\end{prob}

\section{The sizes of $\sigma$-discrete dense sets}\label{sizes}

Next, we investigate the size of the smallest $\sigma$-discrete dense set in $e$-separable spaces.

\begin{defin}
For an $e$-separable space $X$, we define $$d_e(X)=\min \{|E|:\text{ E is a dense } \sigma \text{-closed-discrete subset of } X\}.$$
\end{defin}

Clearly $d(X)\leq d_e(X)\leq e(X)$ for any $e$-separable space $X$ and next we show that $d(X)= d_e(X)$ fails to hold in general:

\begin{thm}\label{examp} There is a 0-dimensional $e$-separable space $X$ such that $$\mathfrak{c}=d(X)< d_e(X)=e(X)=w(X)=2^\mathfrak{c}.$$
\end{thm}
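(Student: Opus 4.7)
The plan is to construct $X$ as a Mrowka-style $\Psi$-space $X = S \sqcup \mathcal{A}$, where $S$ is a discrete set of isolated points of cardinality $\mathfrak{c}$ (supplying $d(X) \leq \mathfrak{c}$) and $\mathcal{A}$ is a family of subsets of $S$ of cardinality $2^{\mathfrak{c}}$, regarded as non-isolated limit points with basic neighborhoods $\{A\} \cup (A \setminus F)$, where $F$ ranges over a carefully chosen ideal on $S$. The closed discrete set $\mathcal{A}$ of cardinality $2^{\mathfrak{c}}$ witnesses $e(X) = 2^{\mathfrak{c}}$, and a basis-counting computation yields $w(X) = 2^{\mathfrak{c}}$; 0-dimensionality follows from the basic opens being clopen.

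First, I would combinatorially construct the family $\mathcal{A}$ together with its companion ideal in ZFC. A naive almost-disjoint family of countable subsets of $S$ cannot exceed $\mathfrak{c}^{\omega} = \mathfrak{c}$ in size, so one is forced to take $\mathcal{A}$ consisting of larger subsets of $S$ with a suitably weakened almost-disjoint condition, calibrated so that: (a) the basic-open scheme gives a Hausdorff and 0-dimensional topology; (b) $S$ admits a decomposition $S = \bigcup_{n \in \omega} S_n$ in which each $S_n$ is closed discrete in $X$ (so $X$ is $e$-separable, witnessed by $\bigcup_n S_n \cup \mathcal{A}$ or a suitable enlargement); and (c) $|\mathcal{A}| = 2^{\mathfrak{c}}$.

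Next, the invariants $d(X) = \mathfrak{c}$, $e(X) = 2^{\mathfrak{c}}$, and $w(X) = 2^{\mathfrak{c}}$ follow by routine verification. The heart of the theorem is the bound $d_e(X) \geq 2^{\mathfrak{c}}$: given any $\sigma$-closed-discrete dense $D \subseteq X$, I would show $\mathcal{A} \subseteq D$. Indeed, suppose $A \in \mathcal{A} \setminus D$ and write $D = \bigcup_{n \in \omega} D_n$ with each $D_n$ closed discrete. For each $n$, the closedness of $D_n$ combined with $A \notin D_n$ forces $D_n \cap A$ into the chosen ideal on $A$; since $\operatorname{cf}(\mathfrak{c}) > \omega$ by K\"onig's theorem, the countable union $\bigcup_n (D_n \cap A) = D \cap A$ is still in the generated $\sigma$-ideal, hence too small to witness the denseness of $D$ at the basic neighborhoods of $A$, a contradiction. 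Therefore $|D| \geq |\mathcal{A}| = 2^{\mathfrak{c}}$.

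The main obstacle is the combinatorial step: producing in ZFC a family $\mathcal{A}$ of size $2^{\mathfrak{c}}$ on a base of size $\mathfrak{c}$, together with a compatible ideal, that simultaneously yields Hausdorffness (requiring enough ``disjointness'' between members of $\mathcal{A}$) and admits the $\omega$-decomposition of $S$ needed for $e$-separability (requiring the ideal to be coarse enough and the members of $\mathcal{A}$ to be sparse enough to cover $S$). These requirements pull in opposite directions, and the constraint $\operatorname{cf}(\mathfrak{c}) > \omega$ sharply restricts the natural choices; resolving the tension through a delicate combinatorial construction is the crux of the proof.
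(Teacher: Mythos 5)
There is a genuine gap, and in fact the approach itself cannot work. In your $\Psi$-space $X=S\sqcup\mathcal{A}$ the points of $S$ are isolated, so $S$ is dense and is contained in \emph{every} dense subset of $X$. Since a subset of a closed discrete set is closed discrete, $\sigma$-closed-discreteness is inherited by subsets; hence if \emph{any} dense set is $\sigma$-closed-discrete, then $S$ itself is, and then $S$ is a dense $\sigma$-closed-discrete set of size $\mathfrak{c}$, giving $d_e(X)=d(X)=\mathfrak{c}$. Conversely, if $S$ is not $\sigma$-closed-discrete, then $X$ is not $e$-separable at all. So your requirement (b) (that $S=\bigcup_n S_n$ with each $S_n$ closed discrete, to get $e$-separability) directly contradicts the goal $d_e(X)=2^{\mathfrak{c}}$: the tension you flag at the end is not merely delicate, it is unresolvable for any space whose isolated points form a dense set of size $\mathfrak{c}$. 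Relatedly, your argument that $D\cap A=\bigcup_n(D_n\cap A)$ stays ``in the generated $\sigma$-ideal'' is false: the ideal of subsets of $A$ missing a neighbourhood of $A$ is not $\sigma$-complete (countably many closed discrete traces can union to all of $A$ -- that is exactly what happens when $A$ is an accumulation point of a $\sigma$-closed-discrete set), and invoking $\mathrm{cf}(\mathfrak{c})>\omega$ is a non sequitur here since the relevant smallness is topological, not a cardinality bound.

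The paper avoids this obstruction by working inside the dense-in-itself space $\omega^{2^{\mathfrak{c}}}$, so that the small dense set need not be contained in the $\sigma$-closed-discrete one. Concretely, $X=D\cup E\subseteq\omega^{2^{\mathfrak{c}}}$ where $D=\bigcup_n D_n$ with each $D_n$ a countably compact dense subset of $n^{2^{\mathfrak{c}}}$ of size $\mathfrak{c}$ (so $|D|=\mathfrak{c}$, $e(D)=\omega$, and every countable subset of $D$ is nowhere dense since $d(\omega^{2^{\mathfrak{c}}})=\mathfrak{c}$), while $E$ is a dense $\sigma$-closed-discrete set with $d(E)=2^{\mathfrak{c}}$ obtained from Lemma \ref{power0} applied to $\sigma(\omega^{2^{\mathfrak{c}}})$ together with Theorem \ref{myc}. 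Then any $\sigma$-closed-discrete $F$ with $|F|\le\mathfrak{c}$ meets $E$ in a set avoiding some open $U$ (as $d(E)=2^{\mathfrak{c}}$) and meets $D$ in a set of size at most $e(D)=\omega$, hence nowhere dense, so $F$ is not dense; this is the content of Claim \ref{4.3}. If you want to salvage your write-up, you must replace the $\Psi$-space scaffold with a construction in which the $\mathfrak{c}$-sized dense set consists of non-isolated points whose small subsets are nowhere dense.
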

\begin{proof} First note the following:
\begin{clm} \label{4.3} Suppose that a space $X$ can be written as $D\cup E$ so that
\begin{enumerate}
	\item $D$ is dense in $X$,
	\item $E$ is dense and $\sigma$-closed-discrete in $X$,
	\item $d(D)<d(E)$, and
	\item every $A\in [D]^{\leq e(D)}$ is nowhere dense in $X$ (or equivalently, in $D$).
\end{enumerate}
 Then $X$ is $e$-separable and $d(X)< d_e(X)$.
\end{clm}
\begin{proof}
X is $e$-separable by (2) and $d(X)\leq d(D)$ by (1). We prove that if $F\in [X]^{\leq d(X)}$ and $F$ is $\sigma$-closed-discrete then $F$ is not dense in $X$; this proves the claim. Take $F\subseteq X$ as above and note that by (3) there is a non-empty open set $U\subseteq X$ such that $U\cap E \cap F = \emptyset$. As $|F\cap D|\leq e(D)$, $F\cap D$ must be nowhere dense in $X$. Thus there is a non-empty open $V\subseteq U$ such that $V \cap F \cap D =\emptyset$. Thus $V \cap F =\emptyset$ showing that $F$ is not dense.
\end{proof}

Now, it suffices to construct a 0-dimensional space $X=D\cup E$ satisfying (1)-(4). Let us construct $X=D\cup E\subseteq \omega ^{2^\mathfrak{c}}$ such that
\begin{enumerate}[(i)]
	\item $D$ is dense in $\omega ^{2^\mathfrak{c}}$,
	\item $E$ is dense and $\sigma$-closed-discrete in $\omega ^{2^\mathfrak{c}}$, 
	\item $|D|=\mathfrak{c}$ and $d(E)=2^{\mathfrak{c}}$, and
	\item $e(D)=\omega$.
\end{enumerate}
It is trivial to see that (i)-(iii) implies (1)-(3), respectively, while (iv) implies (4) using the fact that $d(\omega ^{2^\mathfrak{c}})=\mathfrak{c}$.

First we construct $D$. Construct dense subsets $D_n\subseteq n^{2^\mathfrak{c}}$ of size $\mathfrak{c}$ which are countably compact, for each $n\in\omega$; this can be done by choosing a dense subset $D^0_n\subseteq n^{2^\mathfrak{c}}$ of size $\mathfrak{c}$ and adding accumulation points recursively ($\omega_1$ many times) for all countable subsets. Define $D=\bigcup_{n\in\omega}D_n$. Then $D$ is dense in $\omega^{2^\mathfrak{c}}$ as $\bigcup_{n\in\omega} n^{2^\mathfrak{c}}$ is dense in $\omega^{2^\mathfrak{c}}$ and $e(D)=\omega$ as $e(D_n)=\omega$ for all $n\in\omega$; thus $D$ satisfies (i), (iv) and the first part of (iii).

Now, we construct $E$ satisfying (ii) and (iii) which finishes the proof. Let $S=\sigma(\omega^{2^\mathfrak{c}})=\{x\in\omega^{2^\mathfrak{c}}:|\{\alpha\in 2^\mathfrak{c}:x(\alpha)\neq 0\}|<\aleph_0\}$; then $d(S)=2^\mathfrak{c}$ and $S$ is dense in $\omega^{2^\mathfrak{c}}$. Recall that $\omega^{2^\mathfrak{c}}$ contains a closed discrete set of size $2^\mathfrak{c}$ by Theorem \ref{myc}. Now, by applying Lemma \ref{power0}, we find a $\sigma$-closed-discrete $E$ which is dense in $\omega^{2^\mathfrak{c}}$ and satisfies $d(E)=d(S)=2^\mathfrak{c}$.
\end{proof}

 Naturally, one can consider the same problem for $d$-separable spaces. Let us present an example along the same lines under the assumption $\aleph_1<\mathfrak c$:

\begin{prop}\label{prop:dsep} Suppose that  $\aleph_1<\mathfrak c$. Then there is a $d$-separable space $X$ with $d(X)=\aleph_1$ that contains no dense $\sigma$-discrete sets of size $\aleph_1$.
\end{prop}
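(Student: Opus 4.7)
The plan is to mimic the proof of Theorem \ref{examp} in the $\sigma$-discrete setting, with the cardinals $\aleph_1$ and $\mathfrak{c}$ now playing the roles that $\mathfrak{c}$ and $2^{\mathfrak{c}}$ played there.

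First I would establish a $d$-separable analog of Claim \ref{4.3}: if $X = D \cup E$ is such that
\begin{itemize}
\item $D$ is dense in $X$ with $|D|=\aleph_1$,
\item $E$ is dense and $\sigma$-discrete in $X$ with $d(E) > \aleph_1$, and
\item every $\sigma$-discrete $A \subseteq D$ with $|A| \le \aleph_1$ is nowhere dense in $X$,
\end{itemize}
then $X$ is $d$-separable with $d(X) = \aleph_1$ and no dense $\sigma$-discrete subset of size $\aleph_1$. The verification tracks Claim \ref{4.3} line by line: given a putative dense $\sigma$-discrete $F \subseteq X$ of size $\le\aleph_1$, split $F = (F \cap D) \cup (F \cap E)$; the bound $|F \cap E| \le \aleph_1 < d(E)$ yields a nonempty open $U_0 \subseteq X$ with $U_0 \cap F \cap E = \emptyset$; the $\sigma$-discrete subset $F \cap D \subseteq D$ of size $\le\aleph_1$ is nowhere dense in $X$ by hypothesis, producing a nonempty open $U \subseteq U_0$ with $U \cap F \cap D = \emptyset$; then $U \cap F = \emptyset$.

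The construction then exhibits $X = D \cup E$ inside a suitable ambient product $Y$ — for example $Y = D(\omega)^\kappa$ for a $\kappa$ chosen (using $\aleph_1 < \mathfrak{c}$) so that $d(Y) = \aleph_1$. The set $E$ is obtained by applying Lemma \ref{power0} to a dense subset of $Y$ whose density is at least $\aleph_2$ (the $\sigma$-product of $Y$ furnishes one, since its density equals $\kappa$), using a closed discrete set of matching size provided by Theorem \ref{myc}; this produces a dense $\sigma$-closed-discrete $E \subseteq Y$ with $d(E) > \aleph_1$. The set $D \subseteq Y$ is to be built of size $\aleph_1$, dense in $Y$, with \emph{countable spread} and \emph{nowhere separable} (every nonempty open subset of $D$ has uncountable density). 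These two properties together imply the required hypothesis: every $\sigma$-discrete subset of $D$ is then countable, and every countable subset of $D$ is nowhere dense in $X$ (since $D$ is dense in $X$, a countable set dense in an open set of $X$ would witness separability of an open subset of $D$).

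The main obstacle is constructing $D$: essentially a ``small L-space'' living densely in the product $Y$. The hypothesis $\aleph_1 < \mathfrak{c}$ is what makes this available: by transfinite recursion of length $\omega_1$, one adds points $d_\alpha$ one at a time, at each stage (i) ensuring density by hitting a pre-enumerated basic open set, and (ii) diagonalizing against the countably many already-chosen candidates for ``countable set dense in a given open set'' by pushing the new point outside every such closure. The extra room provided by $\mathfrak{c} > \aleph_1$ is precisely what lets step (ii) succeed at every stage. Verifying that the full recursion yields a $D$ which is globally of countable spread and nowhere separable — and not merely with respect to the explicit countable subsets diagonalized against — is the delicate combinatorial heart of the argument.
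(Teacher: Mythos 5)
Your skeleton coincides with the paper's: the same decomposition $X=D\cup E$, the same $d$-separable analogue of Claim \ref{4.3}, and the same recipe for $E$ via Lemma \ref{power0}, Theorem \ref{myc} and the $\sigma$-product. (A small point on the ambient space: a $\kappa$ with $d(D(\omega)^\kappa)=\aleph_1$ need not exist -- it requires $2^{\aleph_1}>\mathfrak c$, which does not follow from $\aleph_1<\mathfrak c$. The paper instead works inside $\omega^{\omega_2}$, which is \emph{separable} under $\mathfrak c\ge\aleph_2$; the value $d(X)=\aleph_1$ then comes from $|D|=\aleph_1$ together with the Claim \ref{4.3}-style argument, not from the density of the ambient product.)

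The genuine gap is the construction of $D$. What you need is, in effect, a ZFC L-space: a nowhere separable space of size $\aleph_1$ with countable spread, sitting densely in a product of countable discrete spaces. Your proposed $\omega_1$-length recursion, ``diagonalizing at each stage against the countably many already-chosen candidates,'' cannot deliver this: the objects to be defeated are the countable subsets of the \emph{final} $D$ that are dense in some open set, and the uncountable discrete subspaces of the final $D$; there are $2^{\aleph_0}>\aleph_1$ of the former, and neither family is determined by, or enumerable from, any initial segment of the construction, so no bookkeeping of length $\omega_1$ can list them in advance. Moreover, killing every uncountable discrete subspace is precisely the difficulty of the L-space problem, and the hypothesis $\aleph_1<\mathfrak c$ provides no leverage for it -- in the paper that hypothesis is used only to make $\omega^{\omega_2}$ separable while keeping $d(\sigma(\omega^{\omega_2}))=\aleph_2>\aleph_1$ for the $E$ side. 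The paper obtains $D$ by transporting Moore's ZFC L-space colouring $c:[\omega_1]^2\to\omega$ (\cite[Theorem 5.4]{moore}) into $D_0^{\omega_1}$ for a countable dense $D_0\subseteq\omega^{\omega_2}$, setting $y_\beta(\alpha)=d_{c(\alpha,\beta)}$, and then proving via $\Delta$-system arguments against the oscillation property of $c$ that a tail restriction of $\{y_\beta:\beta<\omega_1\}$ is dense, hereditarily Lindel\"of and non-separable (Claim \ref{jm}). That claim is the substantive content of the proposition; without it, or something of comparable strength, your outline does not close.
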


\begin{proof} J. Moore \cite[Theorem 5.4]{moore} proved that there is a colouring $c:[\oo_1]^2\to \oo$ such that for every $n\in \oo$, uncountable pairwise disjoint $A\subseteq [\oo_1]^n$, uncountable $B\subseteq \oo_1$ and $h:n\to \oo$ there exist $a\in A$ and $\beta\in B\setm \max(a)$ such that $c(a(i),\beta)=h(i)$ for every $i<n$, where $a=\{a(i):i<n\}$.

Suppose that $D=\{d_n:n\in\omega\}$ is any countable space.

\begin{clm} \label{jm} There is a dense and hereditarily Lindel\"of subspace $Y\subseteq D^{\oo_1}$ that is not separable.
\end{clm}
\begin{proof}
For each $\beta<\omg$, define $y_{\beta}\in  D^\omg$ as follows:
\begin{equation}\label{eq:ya}
y_{\beta}({\alpha})=\left \{
\begin{array}{ll}
d_{c(\alpha,\beta)}&\text{if ${\alpha}<{\beta}$,}\\ 
d_0&\text{if ${\alpha}\geq {\beta}$.}
\end{array}
\right . 
\end{equation}
Now let $Y=\{y_{\beta}:\beta<\oo_1\}$.

We claim that there is an $\alpha<\omg$ so that $Y\uhp (\omg\setm \alpha)$ is dense in $D^{\omg\setm \alpha}\simeq D^\omg$. Suppose otherwise: then we can find basic open sets $[\varepsilon_\alpha]$ in $D^{\omg\setm \alpha}$ so that $Y\cap [\varepsilon_\alpha]=\emptyset$. By standard $\Delta$-system arguments, we find $I\in [\omg]^{\aleph_1}$, $n\in \omega$ and $h:n\to \omega$ so that $\dom(\varepsilon_\alpha)=\{a_\alpha(i):i<n\}$ are pairwise disjoint for $\alpha\in I$ and $d_{h(i)}\in \varepsilon_\alpha(a_\alpha(i))$ for each $i<n$. Now, there exist $\alpha\in I$ and $\beta\in \omg\setm \max(\dom(\varepsilon_\alpha))$ so that $c(a_\alpha(i),\beta)=h(i)$ for all $i<n$. This means that $d_{c(a_\alpha(i),\beta)}\in \varepsilon_\alpha(a_\alpha(i))$ for $i<n$ and so $y_\beta\in [\varepsilon_\alpha]$. This contradicts our assumption.

It is clear that $d(Y\uhp (\omg\setm \alpha))=\aleph_1$. It remains to prove that $Y\uhp (\omg\setm \alpha)$ is hereditarily Lindel\"of.

Fix $W\in[\omega_1]^{\aleph_1}$ and, for each $\gamma\in W$, let $[\varepsilon_\gamma]$ be a basic open subset of $D^{\omega_1\setminus\alpha}$ with $y_\gamma\uhp(\omg\setminus\alpha)\in[\varepsilon_\gamma]$; we may assume that $\max(\dom(\varepsilon_\gamma))>\gamma$.
Suppose, by way of contradiction, that for each $\eta<\omg$
we have $\{y_\gamma\uhp(\omg\setminus\alpha):\gamma\in W\}\nsubseteq\bigcup\{[\varepsilon_\gamma]\colon\gamma\in W\cap\eta\}$.
We can then recursively define, for $\zeta<\omg$,

$\cdot$  $\delta_0$ as the least element of $W$;

$\cdot$  $\delta_{\zeta+1}$ as the least $\delta\in W$ satisfying $\delta>\sup_{\eta\le\zeta}\max(\dom(\varepsilon_{\delta_\eta}))$ and $y_\delta\uhp(\omg\setminus\alpha)\notin\bigcup\{[\varepsilon_\gamma]\colon\gamma\in W\cap(\delta_\zeta+1)\}$;

$\cdot$  $\delta_\zeta$ as the least $\delta\in W$ satisfying $\delta>\sup_{\eta<\zeta}\max(\dom(\varepsilon_{\delta_\eta}))$ and $y_\delta\uhp(\omg\setminus\alpha)\notin\bigcup\{[\varepsilon_\gamma]\colon\gamma\in W\cap\sup_{\eta<\zeta}\delta_\eta\}$
if $\zeta$ is a limit ordinal.

Again by $\Delta$-system arguments, there exist
$r\in[\omg\setminus\alpha]^{<\aleph_0}$,
$p:r\to D$,
$Z\in[\omg]^{\aleph_1}$,
$n\in\omega$
and
$h:n\to\omega$
satisfying
\begin{itemize}
\item[$(i)$]
  $r\subseteq\dom(\varepsilon_{\delta_\zeta})$ for all $\zeta\in Z$;
\item[$(ii)$]
  $\dom(\varepsilon_{\delta_\zeta})\setminus r=\{a_\zeta(i):i<n\}$ are pairwise disjoint for $\zeta\in Z$;
\item[$(iii)$]
  $d_{h(i)}\in \varepsilon_{\delta_\zeta}(a_\zeta(i))$ for each $i<n$; and
\item[$(iv)$]
  $p\subseteq y_{\delta_\zeta}$ for all $\zeta\in Z$.
\end{itemize}
Now, there are $\zeta,\zeta'\in Z$ such that $\delta_{\zeta'}\ge\max(\dom(\varepsilon_{\delta_\zeta}))$ and $c(a_\zeta(i),\delta_{\zeta'})=h(i)$ for all $i<n$.
Thus $d_{c(a_\zeta(i),\delta_{\zeta'})}\in \varepsilon_{\delta_\zeta}(a_\zeta(i))$ for $i<n$, whence
$y_{\delta_{\zeta'}}\uhp(\omg\setminus\alpha) \in [\varepsilon_{\delta_\zeta}]$
-- which is a contradiction since the fact that $\delta_{\zeta'}\ge\max(\dom(\varepsilon_{\delta_\zeta}))>\delta_\zeta$
implies $\zeta<\zeta'$ by construction.
\end{proof}

Now, by $\mathfrak c \geq \aleph_2$, we can pick a countable dense $D\subseteq\omega^{\omega_2}$. Then $D^{\omega_1}$ is dense in $(\omega^{\omega_2})^{\omega_1} \simeq \omega^{\omega_2}$. By Claim \ref{jm}, there is a dense $Y \subseteq D^{\omega_1}$ such that every discrete subset of $Y$ is countable and hence nowhere dense (as $Y$ is non-separable). Now, by Lemma \ref{power0}, there is a dense $\sigma$-closed-discrete $E \subseteq \omega^{\omega_2}$ satisfying $d(E) = \aleph_2$ -- in view of Theorem \ref{myc} and the fact that e.g. $\sigma(\omega^{\omega_2})=\{x\in\omega^{\omega_2}:|\{\alpha\in\omega_2:x(\alpha)\neq 0\}|<\aleph_0\}$ is a dense subset of $\omega^{\omega_2}$ with density $\aleph_2$.

Let $X=Y\cup E$. An argument strictly analogue to what is done in Claim \ref{4.3} finishes the proof. 
\end{proof}

The assumption $\aleph_1<\mathfrak c$ is somewhat unnatural in Proposition \ref{prop:dsep} but we do not know how to remove it:

\begin{prob}
 Is there a ZFC example of a $d$-separable space $X$ with the property that every $\sigma$-discrete dense subset of $X$ has cardinality greater than $d(X)$?
\end{prob}

In particular, we cannot answer the following:

\begin{prob}
     Is there, in ZFC, a dense $Y\subseteq 2^{\oo_2}$ of size $\aleph_1$ all of whose $\sigma$-discrete subsets are nowhere dense?
\end{prob}

Finally, recall that any compact, $e$-separable space satisfies $d(X)=d_e(X)$. We do not know if the analogue holds for $d$-separable spaces:

\begin{prob}Is there a $\sigma$-discrete dense subset of size $d(X)$ in any compact, $d$-separable space $X$?
\end{prob}

\section{Preservation under products}\label{pressec}

As mentioned in the introduction, the behaviour of separable and $d$-separable spaces under products and powers is very well described: separability is preserved by products of size $\leq \mathfrak c$ but not bigger; on the other hand, the product of $d$-separable spaces is always $d$-separable. Hence our goal in this section is answering the following natural question: for which cardinals $\kappa$ is it true that every product of $\kappa$ many $e$-separable spaces is $e$-separable? As noted earlier in Example \ref{2^c+}, any such $\kappa$ is at most the continuum.

Let us start with powers of a single $e$-separable space.
We would like to thank Ofelia T. Alas for pointing out the following
to us:

\begin{prop}[Alas]
\label{cpower}

Let $X$ be an $e$-separable space and $\kappa\le\mathfrak{c}$. Then
the space $X^\kappa$ is $e$-separable.

\end{prop}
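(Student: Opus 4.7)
The plan is to mimic the Hewitt--Marczewski--Pondiczery proof, but arrange the construction so that each layer of the dense set produced remains closed discrete. Fix a witness $D = \bigcup_{n\in\omega} D_n$ to the $e$-separability of $X$; by replacing $D_n$ with $\bigcup_{k\le n} D_k$, one may assume that each $D_n$ is a nonempty closed discrete subset of $X$, that the sequence $(D_n)_{n\in\omega}$ is ascending, and that its union is dense in $X$. Using $\kappa \le \mathfrak c$, fix an injection $\sigma \colon \kappa \hookrightarrow 2^\omega$.

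For each $n \in \omega$ and each function $g \colon 2^n \to D_n$, define a point $x_{n,g} \in X^\kappa$ by $x_{n,g}(\beta) = g(\sigma(\beta) \uhp n)$, and set $E_n = \{x_{n,g} : g\colon 2^n \to D_n\}$. The candidate dense $\sigma$-closed-discrete set is $E = \bigcup_{n\in\omega} E_n$.

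Density of $E$ in $X^\kappa$ should be the easy step: given a basic open $[\varepsilon]$ with $\dom(\varepsilon) = \{\beta_1,\ldots,\beta_k\}$, density of $D$ in $X$ yields $d_j \in D \cap \varepsilon(\beta_j)$ for each $j$; since $(D_n)$ is ascending with union $D$, one can choose $n$ large enough that all the $d_j$ already lie in $D_n$ and, simultaneously, that the restrictions $\sigma(\beta_1)\uhp n,\ldots,\sigma(\beta_k)\uhp n$ are pairwise distinct. Any $g\colon 2^n \to D_n$ with $g(\sigma(\beta_j)\uhp n) = d_j$ for each $j$ then witnesses $x_{n,g} \in [\varepsilon]$.

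The technical heart, and the step I expect to be the main obstacle, is verifying that each $E_n$ is closed discrete in $X^\kappa$. Given $x \in X^\kappa$, for each $s \in 2^n$ with $F_s := \{\beta < \kappa : \sigma(\beta)\uhp n = s\}$ nonempty, fix $\beta_s \in F_s$ and, using that $D_n$ is closed discrete in $X$, an open neighbourhood $U_s$ of $x(\beta_s)$ with $|U_s \cap D_n| \le 1$. The basic open neighbourhood of $x$ cut out by the constraints ``coordinate $\beta_s$ lies in $U_s$'' (for all such $s$) then meets $E_n$ in at most one point, since $g(s) = x_{n,g}(\beta_s)$ is pinned down to the unique element (if any) of $U_s \cap D_n$ for each $s$ with $F_s \ne \emptyset$, while the values of $g$ on $s$ with $F_s = \emptyset$ do not affect $x_{n,g}$. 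What this argument is navigating is the fact that $D$ itself is only $\sigma$-closed-discrete: without the preliminary reduction to an ascending chain of closed discrete pieces, one could not shrink neighbourhoods to meet $D$ in a single point, and closed-discreteness of each layer would fail.
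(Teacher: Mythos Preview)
Your proof is correct and follows essentially the same Hewitt--Marczewski--Pondiczery strategy as the paper: the paper indexes the product by a subspace $Y\subseteq\mathbb{R}$ and uses finite tuples of pairwise disjoint basic open sets from a countable base of $Y$ to partition the coordinates, whereas you use an injection into $2^\omega$ and the level-$n$ cylinders, but the closed-discreteness verification for each layer is the same argument in both. The only cosmetic difference is that the paper allows different $D_{k_i}$ on each block (with a fixed default point $p$ outside), while you first pass to an ascending sequence $(D_n)$ and use a single $D_n$ uniformly at level $n$; these devices serve exactly the same purpose.
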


\begin{proof}

Let $(D_n)_{n\in\omega}$ be a sequence of closed discrete subsets of
$X$ with $\bigcup_{n\in\omega}D_n$ dense in $X$.
Fix a subspace $Y\subseteq\mathbb{R}$ with $|Y|=\kappa$, and let
$\mathcal{B}$ be a countable base for $Y$. Now consider
$T=\bigcup_{n\in\omega}(S_n\times\mbox{}^n\omega)$, where
$S_n=\{(B_0,\dots,B_{n-1})\in\mbox{}^n\mathcal{B}:\forall
i,j<n\;(i\neq j\Rightarrow B_i\cap B_j\neq\emptyset)\}$ for every
$n\in\omega$.

Fix an arbitrary $p\in X$.
For each $t=((B_0,\dots,B_{n-1}),(k_0,\dots,k_{n-1}))\in T$, we
define $E_t$ to be the set of those $x\in X^Y$ so that there is an $(a_i)_{i<n}\in\prod\{D_{k_i}:i<n\}$ with 

 \[
   x(\alpha) = \begin{cases}
		      a_i, & \text{for } \alpha\in B_i \text { and }i<n, \text{ and}\\  
p, & \text{for } \alpha\in Y\setminus\bigcup_{i=0}^{n-1}B_i.
   
        \end{cases}
  \]

It is routine to verify that each $E_t$ is a closed discrete subspace
of $X^Y$ and that $\bigcup_{t\in T}E_t$ is dense in $X^Y$. Since $T$
is countable and $|Y|=\kappa$, it follows that $X^\kappa$ is
$e$-separable.
\end{proof}

Now, we turn to arbitrary products of $e$-separable spaces. We will see that the heart of the matter is whether we can find \emph{large closed discrete sets in the product of small discrete spaces}.

In \cite{Mrow}, Mr\'owka introduced a class of cardinals denoted by $\mc M^*$: we write $\lambda\in \mc M^*$ iff  there is a product of $\lambda$ many discrete spaces $X=\prod \{X_\alpha:\alpha<\lambda\}$ each of size $<\lambda$ so that $X$ has a closed discrete set of size $\lambda$. Equivalently, the product $\prod\{D(\nu)^\lambda:\nu\in \lambda\cap \card\}$ contains a closed discrete set of size $\lambda$.

If a cardinal $\lambda$ is in $\mathcal M^*$ then some degree of compactess fails for $\lambda$. Let us make this statement precise: recall that $\mc L_{\lambda,\omega}$ is the infinitary language which allows conjunctions and disjunctions of  $<\lambda$ formulas and universal or existential quantification over finitely many variables. The language $\mc L_{\lambda,\oo}$ is \emph{weakly compact} by definition if every set of at most $\lambda$ sentences $\Sigma$ from $\mc L_{\lambda,\oo}$ has a model provided that every $S\in [\Sigma]^{<\lambda}$ has a model (see \cite{Jech}, p. 382).

\begin{thm}[Mr\'owka \cite{Mrow}, Chudnovsky \cite{Cud}] $\lambda \notin \mc M^*$ if and only if  $\mc L_{\lambda,\oo}$ is weakly compact.
\end{thm}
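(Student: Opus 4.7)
My plan is to treat the equivalence as a direct translation between closed-discrete sets in products of small discrete spaces and unsatisfiable theories in $\mc L_{\lambda,\oo}$. For the easy direction, assuming weak compactness of $\mc L_{\lambda,\oo}$, take any family $\{x_\beta:\beta<\lambda\}\subseteq X:=\prod_{\alpha<\lambda}X_\alpha$ with $|X_\alpha|<\lambda$. Introduce constants $c_\alpha$ for each $\alpha<\lambda$ together with names for elements of $\bigcup_\alpha X_\alpha$, and consider the theory $T$ consisting of $\bigvee_{a\in X_\alpha}(c_\alpha=a)$ for each $\alpha<\lambda$ (a legal $<\lambda$-disjunction of atomic formulas) and, for each finite $F\subseteq\lambda$ and each $\varepsilon\in\prod_{\alpha\in F}X_\alpha$ with $|\{\beta:x_\beta\uhp F=\varepsilon\}|\le 1$, the sentence $\bigvee_{\alpha\in F}(c_\alpha\neq\varepsilon(\alpha))$. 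A model of $T$ reads off a point $y\in X$ by $y(\alpha):=c_\alpha^M$ whose restriction to every finite $F$ matches $\ge 2$ indices, i.e., a limit point of $\{x_\beta\}$.

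Since $|T|\le\lambda$, weak compactness reduces to finding a model of each $T_0\subseteq T$ with $|T_0|<\lambda$. But such a $T_0$ mentions $<\lambda$ many coordinates and forbids $<\lambda$ many configurations $(F_j,\varepsilon_j)_{j\in J}$, each excluded by at most one index; using regularity of $\lambda$, all but $<\lambda$ many $\beta$ give a model of $T_0$ via $c_\alpha\mapsto x_\beta(\alpha)$. Weak compactness then delivers a global model of $T$, a limit point of $\{x_\beta\}$, so $\{x_\beta\}$ is not closed discrete, proving $\lambda\notin\mc M^*$. For the converse, let $\Sigma=\{\phi_\beta:\beta<\lambda\}\subseteq\mc L_{\lambda,\oo}$ have every $<\lambda$-subset consistent, and for each $\gamma<\lambda$ pick a model $N_\gamma\models\Sigma_\gamma:=\{\phi_\beta:\beta<\gamma\}$. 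Downward L\"owenheim-Skolem for $\mc L_{\lambda,\oo}$ lets me arrange all $N_\gamma$ on a shared universe $V$ of size $\mu<\lambda$ in a Henkin-expanded language. Code each $N_\gamma$ as the point $x_\gamma\in 2^I$ recording, at each coordinate, the truth value in $N_\gamma$ of a subformula-tuple pair from the relevant fragment; here $|I|\le\lambda$, padded up to $\lambda$. Applying $\lambda\notin\mc M^*$, strengthened (see below) to yield a complete accumulation point $y$, and decoding $y$ to a structure $N_\infty$ on $V$, an induction on subformula complexity shows $N_\infty\models\Sigma$: for each $\phi_\beta$, the tail $\{N_\gamma:\gamma>\beta\}\models\phi_\beta$ forces the $\phi_\beta$-coordinate of $y$ to be $1$, and the inductive clauses of $\mc L_{\lambda,\oo}$ ($<\lambda$ conjunctions, finite quantifiers matched by Henkin witnesses) propagate this upward.

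The main obstacle is upgrading $\lambda\notin\mc M^*$ from mere limit-point existence to complete accumulation, and then transferring $\mc L_{\lambda,\oo}$-satisfaction through this. A bare limit point could agree with only a proper minority of the $x_\gamma$'s on a coordinate, which is insufficient to force $y$ to take the eventually-stable value. For the upgrade I argue by contradiction: if no complete accumulation point of $\{x_\gamma\}$ exists, every candidate $y$ has a basic neighborhood meeting $\{x_\gamma\}$ in $<\lambda$ many points, and a greedy transfinite recursion using regularity of $\lambda$ thins $\{x_\gamma\}$ to a closed-discrete subfamily of size $\lambda$, contradicting $\lambda\notin\mc M^*$. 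For the transfer, atomic cases use that $y$ must agree with the $\lambda$-sized majority at each coordinate; $<\lambda$-conjunctions use regularity of $\lambda$; and existentials are absorbed by the Henkin witnesses. Keeping the index set $I$ at size $\le\lambda$ throughout requires trimming the language to the signature actually used in $\Sigma$, which is routine since each $\phi_\beta$ has $<\lambda$ symbols.
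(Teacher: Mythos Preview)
The paper does not prove this theorem at all; it merely cites Mr\'owka and Chudnovsky. So there is no ``paper's proof'' to compare against, and your sketch must be assessed on its own merits.

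Your forward direction (weak compactness of $\mc L_{\lambda,\oo}\Rightarrow\lambda\notin\mc M^*$) is essentially correct: the theory $T$ has size $\le\lambda$, and the finite-subtheory argument works once you note that $\lambda$ is regular (this follows from the hypothesis via Lemma \ref{jechlemma}, which you use but do not cite).

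The converse direction has real gaps. First, a minor slip: you cannot arrange all the $N_\gamma$ on a common universe of size $\mu<\lambda$. Even with downward L\"owenheim--Skolem, the sizes $|N_\gamma|$ will typically be cofinal in $\lambda$ (e.g.\ if $\Sigma$ contains $\lambda$ inequalities among constants). The correct bound is $|V|\le\lambda$, which still keeps $|I|\le\lambda$; this is harmless once stated properly.

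More seriously, your coding in $2^I$ together with ordinary Henkin witnesses does \emph{not} handle the $<\lambda$-ary disjunctions. Suppose $y(\bigvee_{i<\mu}\psi_i)=1$. Complete accumulation tells you that $\lambda$ many $x_\gamma$ agree with $y$ on any \emph{finite} set of coordinates, so for each fixed $i$ you get $\lambda$ many $\gamma$ with $x_\gamma(\psi_i)=y(\psi_i)$; but you cannot conclude $y(\psi_i)=1$ for \emph{some} $i$. Pigeonhole gives an $i^*$ with $\lambda$ many $\gamma$ satisfying $x_\gamma(\psi_{i^*})=1$, yet there may simultaneously be $\lambda$ many with value $0$, and $y$ could sit with the latter. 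The fix is to enrich the coding: add, for each disjunction--tuple pair, a coordinate valued in its index set $\mu<\lambda$ recording a witness disjunct. Then the product is no longer $2^I$ but still a product of $\lambda$ discrete spaces of size $<\lambda$, and a three-coordinate agreement argument goes through.

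Finally, your ``upgrade'' from $\lambda\notin\mc M^*$ to the existence of a complete accumulation point is not justified. The greedy recursion you describe produces a set that is merely right-separated: $U_{x_{\gamma_\beta}}$ contains at most the first $\beta+1$ chosen points, not at most one, and arbitrary $y\in X$ need not have small neighbourhoods at all. In general spaces the two conditions differ (the ordinal space $\omega_1$ has no uncountable closed discrete subset but also no complete accumulation point). The implication you need does hold for products of small discrete spaces, but it requires a genuine argument exploiting the product structure and the weak inaccessibility of $\lambda$ (which one must also first derive from $\lambda\notin\mc M^*$); your sketch does not supply this.
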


Now, as expected, $\lambda \notin \mc M^*$ -- or, equivalently, the statement ``$\mc L_{\lambda,\oo}$ is weakly compact'' -- has some large cardinal strength. First, we mention two classical results:

\begin{lemma}\cite[Exercises 17.17 and 17.18]{Jech} \label{jechlemma} If $\mc L_{\lambda,\oo}$ is weakly compact then $\lambda$ is weakly inaccessible. 
\end{lemma}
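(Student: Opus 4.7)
The plan is to prove the contrapositive: if $\lambda$ is either singular or a successor cardinal, then $\mc L_{\lambda,\oo}$ fails to be weakly compact. In each case I will exhibit a set $\Sigma\subseteq\mc L_{\lambda,\oo}$ of cardinality $\leq\lambda$ that is itself unsatisfiable, yet every $S\in[\Sigma]^{<\lambda}$ has a model. This contradicts weak compactness and forces $\lambda$ to be both regular and a limit cardinal, i.e.\ weakly inaccessible.

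The uniform strategy is to cook up a single sentence $\varphi\in \mc L_{\lambda,\oo}$ forcing any model to have cardinality strictly below $\lambda$, and then to adjoin $\lambda$ fresh constants $e_\alpha$ $(\alpha<\lambda)$ together with the $\lambda$ distinctness axioms $e_\alpha\neq e_\beta$. The resulting theory clearly has no model, while any subcollection of size $<\lambda$ mentions only fewer than $\lambda$ of the constants $e_\alpha$, and so is easily realized in a universe of some intermediate cardinality $\nu<\lambda$.

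For the singular case, I fix a cofinal sequence $(\lambda_\xi)_{\xi<\kappa}$ with $\kappa=\text{cf}(\lambda)<\lambda$ and introduce auxiliary constants $d^\xi_\alpha$ for $\xi<\kappa$ and $\alpha<\lambda_\xi$; then
\[
\varphi \;:=\; \bigvee_{\xi<\kappa}\forall x\bigvee_{\alpha<\lambda_\xi}x=d^\xi_\alpha
\]
says ``for some $\xi<\kappa$ the universe is exhausted by the $d^\xi_\alpha$'', i.e.\ has size at most $\lambda_\xi$. In the successor case $\lambda=\mu^+$, I introduce constants $d_\alpha$ $(\alpha<\mu)$ and take $\varphi:=\forall x\bigvee_{\alpha<\mu}x=d_\alpha$. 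In both cases every disjunction used has length $<\lambda$, so $\varphi$ genuinely lives in $\mc L_{\lambda,\oo}$. Given $S\in[\Sigma]^{<\lambda}$, let $E$ collect the indices $\alpha$ with $e_\alpha$ occurring in $S$; then $|E|<\lambda$ and I choose $\nu<\lambda$ with $\nu\ge|E|$ (some $\lambda_\xi$ in the singular case, $\nu=\mu$ in the successor case), take a universe of size $\nu$, interpret the relevant $d$'s so as to enumerate it (making $\varphi$ true via the chosen $\xi$), and send the $e_\alpha$ for $\alpha\in E$ to distinct points; this yields a model of $S$.

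The main subtlety to watch is the syntax of $\mc L_{\lambda,\oo}$: quantification is only finitary, so ``the universe has size $<\lambda$'' cannot be said by a block of infinitely many existentials but must be encoded indirectly, using fresh constants and a disjunction of length $<\lambda$ as above. Once this is set up cleanly, verifying unsatisfiability of $\Sigma$ and $<\lambda$-satisfiability of every $S\in[\Sigma]^{<\lambda}$ is routine bookkeeping.
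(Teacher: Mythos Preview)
The paper does not give its own proof of this lemma; it simply cites Exercises 17.17 and 17.18 of Jech \cite{Jech}. Your argument is correct and is the standard solution to those exercises: for $\lambda$ singular or a successor, you produce a theory of size $\lambda$ in $\mc L_{\lambda,\omega}$ that is $<\lambda$-satisfiable but unsatisfiable, via a single sentence bounding the universe below $\lambda$ together with $\lambda$ pairwise distinct constants.
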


\begin{lemma}\cite[Theorem 17.13]{Jech} $\lambda$ is a weakly compact cardinal iff it is strongly inaccessible and $\mc L_{\lambda,\oo}$ is weakly compact.
\end{lemma}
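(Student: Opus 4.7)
The plan is to prove the two directions separately, noting that strong inaccessibility is already part of the combinatorial definition of a weakly compact cardinal. I would adopt the standard equivalent formulations: $\lambda$ is weakly compact iff $\lambda$ is strongly inaccessible and every $\lambda$-tree has a cofinal branch, or equivalently iff every $\lambda$-complete filter on $\lambda$ generated by at most $\lambda$ sets extends to a $\lambda$-complete uniform ultrafilter on $\lambda$. The forward direction will use the filter extension property, while the reverse direction will verify the tree property.

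For the forward direction, let $\Sigma=\{\varphi_\alpha:\alpha<\lambda\}$ be a set of $\mc L_{\lambda,\oo}$-sentences every $<\lambda$-subset of which has a model. I would pick, for each $\alpha<\lambda$, a structure $M_\alpha\models\{\varphi_\beta:\beta\le\alpha\}$. The filter on $\lambda$ generated by the tails $\{[\alpha,\lambda):\alpha<\lambda\}$ is $\lambda$-complete and uniform, so by weak compactness it extends to a $\lambda$-complete uniform ultrafilter $U$ on $\lambda$. Form the ultraproduct $M=\prod_U M_\alpha$. The key step is to establish \L o\'s's theorem for $\mc L_{\lambda,\oo}$-formulas: $\lambda$-completeness of $U$ is exactly what lets the induction pass through infinitary conjunctions and disjunctions of length $<\lambda$. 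Once this is in hand, $M\models\varphi_\alpha$ for each $\alpha<\lambda$, because $\{\beta:M_\beta\models\varphi_\alpha\}\supseteq [\alpha,\lambda)\in U$.

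For the reverse direction, assume that $\lambda$ is strongly inaccessible and $\mc L_{\lambda,\oo}$ is weakly compact; it suffices to establish the tree property. Given a $\lambda$-tree $(T,<_T)$ with levels $T_\alpha$ of size $<\lambda$, I would take a language with a constant $\bar x$ for each $x\in T$, the binary symbol $<_T$, and a fresh constant $c$. Let $\Sigma$ contain the atomic diagram of $(T,<_T)$ together with
\[
\sigma_\alpha \;:=\; \bigvee_{x\in T_\alpha}(\bar x<_T c) \qquad (\alpha<\lambda),
\]
each of which lies in $\mc L_{\lambda,\oo}$ since $|T_\alpha|<\lambda$. Any $<\lambda$-subsystem of $\Sigma$ mentions only sentences of index below some $\alpha^*<\lambda$ (by regularity of $\lambda$), and is satisfied in $T$ itself by interpreting $c$ as any node of level $\geq \alpha^*$. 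Weak compactness then yields a model $N$ of all of $\Sigma$, and $\{x\in T:N\models\bar x<_T c\}$ is a cofinal branch of $T$.

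The main technical obstacle sits in the forward direction: carefully verifying \L o\'s's theorem in the $\mc L_{\lambda,\oo}$-setting and invoking the filter extension characterization of weak compactness -- both are standard but genuinely use $\lambda$-completeness, which is exactly the content that makes the cited Theorem~17.13 of Jech nontrivial. The reverse direction is much softer: once the sentences $\sigma_\alpha$ are correctly coded in $\mc L_{\lambda,\oo}$, the tree property falls out essentially for free.
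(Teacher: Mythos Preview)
The paper does not prove this lemma at all; it is simply quoted from Jech, and the authors immediately remark that ``for our current purposes, one can consider the above lemma the definition of weakly compact cardinals.'' So there is no in-paper proof to compare your attempt against.

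On the merits of your proposal: the reverse direction (deriving the tree property from weak compactness of $\mc L_{\lambda,\oo}$) is fine, modulo adding to $\Sigma$ the first-order sentence asserting that the $<_T$-predecessors of $c$ are linearly ordered, so that the set $\{x\in T:N\models\bar x<_T c\}$ is actually a chain.

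The forward direction, however, has a genuine gap. The filter-extension property you invoke---``every $\lambda$-complete filter on $\lambda$ generated by at most $\lambda$ sets extends to a $\lambda$-complete uniform ultrafilter on $\lambda$''---is not a characterization of weak compactness but of measurability: applied to the tail filter it produces a nonprincipal $\lambda$-complete ultrafilter on all of $P(\lambda)$. The correct filter-extension property for weakly compact $\lambda$ only yields a $\lambda$-complete ultrafilter on a prescribed $\lambda$-complete \emph{field} $F\subseteq P(\lambda)$ of size $\le\lambda$, not on $P(\lambda)$ itself. With only such a partial ultrafilter, your ultraproduct-\L o\'s argument breaks at the existential step: to handle $\exists x\,\psi$ you must choose a witness function $\alpha\mapsto b(\alpha)$, and the resulting set $\{\alpha:M_\alpha\models\psi(b(\alpha),\bar a(\alpha))\}$ has no reason to belong to the field $F$ fixed in advance. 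The standard route around this is a Henkin construction whose tree of finite approximations is a $\lambda$-tree, and then the tree property (or the elementary-embedding characterization) supplies the model; the naive ultraproduct does not.
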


For our current purposes, one can consider the above lemma the definition of weakly compact cardinals. Now, given a  weakly compact cardinal $\lambda$, one can enlarge the continuum while the language $\mc L_{\lambda,\oo}$ remains weakly compact:

\begin{thm}[Chudnovsky \cite{Cud}, Boos \cite{Boos}]\label{cud} If $\lambda$ is a weakly compact cardinal and $\mb C_{\lambda^+}$ is the poset for adding $\lambda^+$ many Cohen-reals then $V^{\mb C_{\lambda^+}}\models$ ``$\mc L_{\lambda,\omega}$ is weakly compact hence  $\mathfrak{c}\setm \mc M^*\neq \emptyset$''.
\end{thm}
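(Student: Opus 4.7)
The plan is to verify that $\lambda\notin\mc M^*$ in $W=V^{\mb C_{\lambda^+}}$. The second assertion of the theorem is then automatic: $\mb C_{\lambda^+}$ is ccc (so $\lambda$ is preserved as a cardinal) and forces $\mathfrak c=\lambda^+\geq\lambda$, whence $\lambda\in\mathfrak c\setm\mc M^*$, and by Mr\'owka--Chudnovsky $\mc L_{\lambda,\omega}$ is weakly compact in $W$.

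Suppose for contradiction that $W\models\lambda\in\mc M^*$, as witnessed by a product $X=\prod\{X_\alpha:\alpha<\lambda\}$ of discrete spaces with $|X_\alpha|<\lambda$ and a closed discrete $C\subseteq X$ of size $\lambda$. After replacing each $X_\alpha$ by its cardinality we may assume $(X_\alpha)_{\alpha<\lambda}\in V$, so the only ``new'' object is $C$. The first step is a \emph{support reduction}: each $c\in C$, viewed as a function $\lambda\to\bigcup X_\alpha$, admits a nice $\mb C_{\lambda^+}$-name whose support has size $\leq\lambda$ (countable per coordinate by the ccc, union over $\lambda$ coordinates); unioning over $C$ yields $A\in[\lambda^+]^{\leq\lambda}$ with $C\in V[G\cap\mb C_A]$. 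Using the canonical factorisation $\mb C_{\lambda^+}\cong\mb C_A\times\mb C_{\lambda^+\setm A}$ and reindexing $A$ as $\lambda$, we may assume $C\in V^{\mb C_\lambda}$. Since basic open sets in $X$ are determined by finite partial functions, closed-discreteness of $C$ in $X^W$ passes down to closed-discreteness of $C$ in $X^{V^{\mb C_\lambda}}$, so $V^{\mb C_\lambda}\models\lambda\in\mc M^*$.

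The second step is to invoke the \emph{preservation theorem}: weak compactness of $\lambda$ is preserved by every $\lambda$-cc forcing of cardinality $\leq\lambda$; in particular by $\mb C_\lambda$, which is ccc of size $\lambda^{<\omega}=\lambda$. Consequently $\lambda$ remains weakly compact in $V^{\mb C_\lambda}$, so once more by Mr\'owka--Chudnovsky, $V^{\mb C_\lambda}\models\lambda\notin\mc M^*$, contradicting the first step. The standard proof of preservation lifts a weakly compact elementary embedding $j:V\to M$ (with $\mathrm{crit}(j)=\lambda$ and $M^\lambda\cap V\subseteq M$) through $\mb C_\lambda$: since conditions of $\mb C_\lambda$ are hereditarily finite objects of rank below $\lambda$, we have $j\uhp\mb C_\lambda=\mathrm{id}$, hence $j(H)=H$ for the generic $H$, so $j$ lifts to $\tilde j:V[H]\to M[H]$, and $\tilde j$ serves as a weakly compact embedding in $V[H]$.

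I expect the preservation step to be the main obstacle. In $W$ itself, $\lambda$ is only weakly inaccessible ($2^{\aleph_0}=\lambda^+>\lambda$) and so is not weakly compact; the preservation result is applied only to the intermediate model $V^{\mb C_\lambda}$, where strong inaccessibility is still intact because $|\mb C_\lambda|=\lambda$. Carefully verifying that the lifted embedding $\tilde j$ retains the required closure (so that $M[H]$ still sees $V[H]$-objects of size $\leq\lambda$) and genuinely reflects the putative counterexample $C$, together with the nice-name bookkeeping that underpins the support reduction, is the combinatorial heart of the Chudnovsky/Boos argument.
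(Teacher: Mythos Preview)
The paper does not prove Theorem~\ref{cud}; it is merely quoted as a known result of Chudnovsky and Boos, with no argument given. So there is no ``paper's own proof'' to compare your proposal against.

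That said, your overall strategy---support reduction to $V^{\mb C_\lambda}$ followed by preservation of weak compactness under $\mb C_\lambda$---is the right shape for a proof of this result. One technical point in your lifting sketch deserves correction: you write $j(H)=H$, but this is not what holds. What is true is that $j$ fixes each \emph{condition} of $\mb C_\lambda$ (they have rank below $\lambda$), so $j\mathchoice{\mbox{}''}{\mbox{}''}{\mbox{}''}{\mbox{}''}H=H$; however $j(\mb C_\lambda)$ is the poset $\mb C_{j(\lambda)}$ as computed in the target model, which properly extends $\mb C_\lambda$. To lift $j$ you must produce, in $V[H]$, a filter $H^*\supseteq H$ that is generic over the target model for this larger poset. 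In the Hauser-style setup (where the target model has size $\lambda$) this can be done by a diagonalisation inside $V[H]$, but it is not automatic and is exactly the step that needs to be spelled out. Your own closing paragraph flags the lifting as the delicate point, so you are aware of the issue; just be sure not to conflate $j\mathchoice{\mbox{}''}{\mbox{}''}{\mbox{}''}{\mbox{}''}H$ with $j(H)$ when you write it up.
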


Finally, recently B. Cody, S. Cox, J. D. Hamkins and T. Johnstone \cite{joel, joel2} showed that a weakly compact cardinal can be recovered from $\mc L_{\lambda,\oo}$ being weakly compact:

\begin{thm}\label{joel}
 If $\mc L_{\lambda,\oo}$ is weakly compact then $\lambda$ is weakly compact in $L$.
\end{thm}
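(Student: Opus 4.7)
The plan is to deduce weak compactness of $\lambda$ in $L$ by verifying both ingredients of the characterization in the lemma immediately before Theorem \ref{cud}: that $\lambda$ is strongly inaccessible in $L$, and that $L$ satisfies the weak compactness of $\mc L_{\lambda,\oo}$.

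For the strong inaccessibility part, I would first apply Lemma \ref{jechlemma} in $V$ to conclude that $\lambda$ is weakly inaccessible in $V$. Both ``is a cardinal'' and ``is regular'' are downward absolute to inner models, since any collapsing or cofinal map lying in $L$ is a fortiori in $V$. Hence $\lambda$ is weakly inaccessible in $L$, and combined with $L\models\mathrm{GCH}$ this upgrades to strong inaccessibility.

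The main task is to show $L\models$ ``$\mc L_{\lambda,\oo}$ is weakly compact''. Fix $\Sigma\in L$, a theory of $(\mc L_{\lambda,\oo})^L$ of cardinality $\le\lambda$ with the $<\lambda$-satisfiability property in $L$. The syntax of $\mc L_{\lambda,\oo}$ is absolute between $V$ and $L$ (as $\lambda$ is a cardinal of both), and the $<\lambda$-satisfiability property in $L$ lifts trivially to $V$; the hypothesis on $V$ therefore supplies a model $M$ of $\Sigma$ in $V$. The difficulty is to produce such a model inside $L$.

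\emph{The main obstacle} lies precisely in this transfer. My plan is to enrich $\Sigma$ to an auxiliary $\mc L_{\lambda,\oo}$-theory $\widehat\Sigma\in L$ by adjoining Henkin-style axioms forcing any model to arise as (the transitive collapse of) an elementary submodel of some $L_\mu$ with $\mu<\lambda^+$, together with sentences coding the $L$-rank of every named witness. The $<\lambda$-satisfiability of $\widehat\Sigma$ in $L$ should follow from strong inaccessibility of $\lambda$ in $L$, which supplies Skolem hulls at every bounded stage, together with the $<\lambda$-satisfiability of $\Sigma$. Applying the hypothesis in $V$ to $\widehat\Sigma$ should then return a $V$-model whose additional axioms force its decoded structure to be a genuine element of $L$, yielding the desired $L$-model of $\Sigma$. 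I expect the delicate technical point to be the design of $\widehat\Sigma$: it must remain of cardinality $\le\lambda$ and $<\lambda$-satisfiable in $L$, yet be rigid enough that its axioms pin down an object of $L$ rather than merely an $L$-looking object of $V$. This is where one seems to need the full $\Pi^1_1$-reflection content of $\mc L_{\lambda,\oo}$-weak compactness, beyond compactness for semantically straightforward theories.
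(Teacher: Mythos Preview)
The paper does not supply a proof of Theorem~\ref{joel}; it is quoted as a recent result of Cody, Cox, Hamkins and Johnstone \cite{joel,joel2} and used as a black box (see also the acknowledgements). So there is no in-paper argument to compare your sketch against.

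As for the sketch itself: the strong-inaccessibility-in-$L$ step is correct and standard. The second step, however, has a genuine gap, and it is exactly the one you flag without resolving. You want a theory $\widehat\Sigma\in L$ whose $V$-models are forced to decode to objects of $L$; but $\mc L_{\lambda,\omega}$ allows only finite quantifier blocks and hence cannot express well-foundedness. A $V$-model of any such $\widehat\Sigma$ may satisfy every ``I am an elementary submodel of some $L_\mu$'' and ``my witnesses have these $L$-ranks'' axiom while being ill-founded, in which case its collapse is not a genuine $L_\mu$ and nothing places the decoded structure in $L$. Henkin constants and rank-coding sentences do not help, since their interpretations in a non-standard model are themselves non-standard ordinals. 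There is no evident way to make $\widehat\Sigma$ ``rigid enough'' in your sense while keeping it a ${<}\lambda$-satisfiable $\mc L_{\lambda,\omega}$-theory of size at most $\lambda$.

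The published argument avoids this entirely. Rather than attempting to transfer arbitrary theories to $L$, one first shows that $\mc L_{\lambda,\omega}$-weak compactness yields the \emph{weakly compact embedding property}: for every transitive $M$ of size $\lambda$ with $\lambda\in M$ there exist a transitive $N$ and an elementary $j\colon M\to N$ with critical point $\lambda$. One then applies this with $M=L_\gamma$ for suitable $\gamma<(\lambda^+)^L$ containing a putative $\lambda$-Aronszajn tree $T\in L$. Elementarity gives $N\models V{=}L$, and transitivity of $N$ then forces $N=L_\delta$ for some ordinal $\delta$, so $N\in L$. A node on level $\lambda$ of $j(T)\in L_\delta$ yields a cofinal branch of $T$ lying in $L$, giving the tree property, and hence weak compactness, of $\lambda$ in $L$. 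The point your plan misses is that one never tries to axiomatize ``being in $L$''; instead one arranges the embedding to have a \emph{transitive} target and lets the rigidity of $L$ (transitive models of $V{=}L$ are levels of $L$) do the work.
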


Now, it is easy to derive our first main result about non-preservation:

\begin{lemma}\label{main1}
 If $\lambda\leq \mathfrak c$ and $\lambda\notin \mc M^*$ then there is a non-$e$-separable product of $\lambda$ many discrete spaces.
\end{lemma}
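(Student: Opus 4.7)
The plan is to take the ``universal'' example $X = \prod\{D(\nu)^\lambda : \nu \in \lambda \cap \card\}$ and verify it is a non-$e$-separable product of $\lambda$ many discrete spaces. By hypothesis $\lambda \notin \mc M^*$, which, in the equivalent reformulation given in the text, says exactly that this specific $X$ has no closed discrete subspace of size $\lambda$; moreover, since $\lambda \notin \mc M^*$ the language $\mc L_{\lambda,\oo}$ is weakly compact, so by Lemma \ref{jechlemma} $\lambda$ is weakly inaccessible. This in particular yields $|\lambda \cap \card| = \lambda$, so $X$ is a product of $\lambda \cdot \lambda = \lambda$ many discrete spaces, as required by the conclusion.

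The key computation is then $d(X) = \lambda$. For the lower bound I would use that any dense set projects onto each factor $D(\nu)$ (a discrete space of size $\nu$), so any dense subset of $X$ has size at least $\nu$ for every $\nu < \lambda$, hence at least $\lambda$. For the upper bound I would count basic open sets: each is determined by a finite partial function picking values in the factors, and since $\lambda$ is a limit cardinal, any finite product of cardinals below $\lambda$ is again below $\lambda$; thus the number of basic open sets is at most $|[\lambda]^{<\omega}| \cdot \lambda = \lambda$, giving $w(X) \leq \lambda$ and hence $d(X) \leq \lambda$.

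With these ingredients in place, I would finish by quoting Observation \ref{cardobs}. Since $\lambda$ is weakly inaccessible we have $cf(d(X)) = cf(\lambda) = \lambda > \omega$, so if $X$ were $e$-separable there would exist a closed discrete subset of $X$ of cardinality $d(X) = \lambda$, directly contradicting $\lambda \notin \mc M^*$. Thus $X$ is not $e$-separable, completing the proof.

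There is no real obstacle here: once one unpacks Mr\'owka's equivalent reformulation of $\lambda \in \mc M^*$, the candidate space $X$ is already handed to us, and the only piece of content is verifying $d(X) = \lambda$, for which the weak inaccessibility of $\lambda$ (granted by Lemma \ref{jechlemma}) is exactly what makes the weight computation go through.
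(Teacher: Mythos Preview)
Your proof is correct and follows essentially the same line as the paper's: both use that $\lambda\notin\mc M^*$ forces $\lambda$ to be weakly inaccessible, take a product of $\lambda$ many discrete spaces each of size $<\lambda$ with unbounded sizes, verify that this product has density $\lambda$ but no closed discrete subset of size $\lambda$, and then invoke Observation~\ref{cardobs}. The only cosmetic differences are that the paper allows an arbitrary such product (rather than the specific ``universal'' one $\prod\{D(\nu)^\lambda:\nu\in\lambda\cap\card\}$) and computes the density by quoting the Hewitt--Marczewski--Pondiczery-style Observation~\ref{dens} (which uses the hypothesis $\lambda\le\mathfrak c$), whereas you compute $d(X)$ directly via the weight; neither difference is substantive.
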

\begin{proof}
 $\lambda\notin \mc M^*$ implies that $\mc L_{\lambda,\omega}$ is weakly compact and hence $\lambda$ is a regular limit cardinal. Now take discrete spaces $X_\alpha$ of size $<\lambda$ such that $\sup \{|X_\alpha|:\alpha<\lambda\}=\lambda$. The product $X=\prod \{X_\alpha:\alpha<\lambda\}$ contains no closed discrete subsets of size $\lambda$ as  $\lambda\notin \mc M^*$. We claim that $d(X)=\lambda$, which follows from the following more general observation:

\begin{obs}\label{dens} Suppose that $\kappa \leq \mathfrak{c}$ and $X_\alpha$ is discrete for $\alpha<\kappa$. Then $d(\prod \{X_\alpha:\alpha<\kappa\})=\sup \{|X_\alpha|:\alpha<\kappa\}$.
\end{obs}

To prove this observation, simply apply the usual trick appearing in the proof of Proposition \ref{cpower}.

Now, we claim that $X$ cannot be $e$-separable. Indeed, if $X$ is $e$-separable then Observation \ref{cardobs} implies that $X$ has a closed discrete subset of size $d(X)=\lambda=cf(\lambda)>\omega$; however, this is not the case.
\end{proof}

Hence, we immediately get the following:

\begin{corol}\label{main1cor} If the existence of a weakly compact cardinal is consistent with ZFC then so is the statement that there is a non-$e$-separable product of less than $\mathfrak{c}$ many discrete spaces.
\end{corol}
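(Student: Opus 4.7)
The plan is to derive the corollary as an essentially immediate combination of Lemma \ref{main1} with the forcing construction of Theorem \ref{cud}. Starting from a model of ZFC in which there is a weakly compact cardinal $\lambda$, I would first force with the Cohen poset $\mb C_{\lambda^+}$ to add $\lambda^+$ many Cohen reals. In the extension $V^{\mb C_{\lambda^+}}$, Theorem \ref{cud} guarantees that $\mc L_{\lambda,\oo}$ is still weakly compact, which is exactly the statement $\lambda\notin\mc M^*$. Since the forcing adds $\lambda^+$ reals, we also have $\mathfrak c\geq \lambda^+>\lambda$ in the extension, so $\lambda<\mathfrak c$.

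With those two facts in hand one just invokes Lemma \ref{main1}: since $\lambda\leq\mathfrak c$ and $\lambda\notin\mc M^*$, there exist discrete spaces $(X_\alpha)_{\alpha<\lambda}$ with $|X_\alpha|<\lambda$ and $\sup_{\alpha<\lambda}|X_\alpha|=\lambda$ whose product $X=\prod_{\alpha<\lambda}X_\alpha$ fails to be $e$-separable. Since $\lambda<\mathfrak c$, this $X$ is a product of fewer than $\mathfrak c$ many discrete spaces, which is exactly the required witness. So $V^{\mb C_{\lambda^+}}$ models ZFC together with the statement ``there is a non-$e$-separable product of less than $\mathfrak c$ many discrete spaces'', giving the desired consistency.

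There is essentially no obstacle beyond citing the right theorems: the conceptual work is all done in Lemma \ref{main1} (which packages the combinatorics linking $\mc M^*$ with failure of $e$-separability, using Observation \ref{dens} to compute the density and Observation \ref{cardobs} to rule out $e$-separability) and in Theorem \ref{cud} (which ensures that weak compactness of $\mc L_{\lambda,\oo}$ survives the Cohen forcing). The only point worth being careful about is verifying that the $\lambda$ provided by the Cohen extension is still $\leq\mathfrak c$ so that Lemma \ref{main1} applies; but this is immediate since $\mb C_{\lambda^+}$ does not collapse cardinals and makes $\mathfrak c\geq\lambda^+$.
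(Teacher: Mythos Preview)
Your proposal is correct and follows exactly the paper's approach: the paper's proof is the one-liner ``Apply Lemma \ref{main1} and Theorem \ref{cud},'' and you have simply unpacked this citation in full detail. Your additional remark checking that $\lambda<\mathfrak c$ in the extension is a useful clarification but does not depart from the intended argument.
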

\begin{proof}
 Apply Lemma \ref{main1} and  Theorem \ref{cud}.
\end{proof}

Now, we will obtain that it is also consistent with ZFC that every product of at most $\mathfrak c$ many $e$-separable spaces is $e$-separable; we will do so by showing that this last statement is implied by the non-existence of weakly compact cardinals in $L$. It will suffice to prove

\begin{thm}\label{main2} Suppose that $\lambda\leq \mathfrak{c}$ is minimal so that there is a family of $\lambda$ many $e$-separable spaces with non-$e$-separable product. Then $\lambda\notin \mc M^*$ and so $\mc L_{\lambda,\omega}$ is weakly compact. 
\end{thm}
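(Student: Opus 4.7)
The plan is to prove the contrapositive. Fix $\lambda\leq\mathfrak{c}$ minimal such that some product of $\lambda$ many $e$-separable spaces fails to be $e$-separable, assume for contradiction that $\lambda\in\mc{M}^*$, and show that every product $Y=\prod_{\alpha<\lambda}Y_\alpha$ of $e$-separable spaces is $e$-separable---contradicting the minimality of $\lambda$.

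I would begin with two reductions. First, $\lambda$ must be regular uncountable: if $\mathrm{cf}(\lambda)=\kappa<\lambda$, partition $\lambda=\bigsqcup_{\xi<\kappa}I_\xi$ with $|I_\xi|<\lambda$; by minimality each $\prod_{\alpha\in I_\xi}Y_\alpha$ is $e$-separable, and the $\kappa$-fold product of these (which is $Y$) is again $e$-separable by minimality applied to $\kappa<\lambda$, a contradiction. The case $\lambda=\omega$ is excluded because countable products of $e$-separable spaces are $e$-separable by a direct construction. Second, letting $S=\{\alpha:Y_\alpha\text{ is not separable}\}$, if $|S|<\lambda$ then by minimality $\prod_{\alpha\in S}Y_\alpha$ is $e$-separable and by Hewitt--Marczewski--Pondiczery $\prod_{\alpha\notin S}Y_\alpha$ is separable (since $|S^c|\leq\lambda\leq\mathfrak{c}$), and the product of a separable and an $e$-separable space is easily $e$-separable; hence $|S|=\lambda$, and after extracting the separable factor one may assume every $Y_\alpha$ is non-separable. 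By Observation \ref{cardobs} each such $Y_\alpha$ then fails to be countably compact, so $D(\omega)$ embeds in $Y_\alpha$ as a closed discrete subspace.

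The main construction would mimic Lemma \ref{power0} for products: choose an increasing sequence $(I_n)_{n\in\omega}$ of subsets of $\lambda$ with $|I_n|=|\lambda\setminus I_n|=\lambda$ and $\bigcup_n I_n=\lambda$, fix a dense $D\subseteq Y$ with $|D|=d(Y)$, and---crucially---produce, for each $n$, a closed discrete $E_n\subseteq\prod_{\alpha\in\lambda\setminus I_n}Y_\alpha$ of size $|D|$. Granting the $E_n$ and bijections $\varphi_n\colon D\to E_n$, defining $\psi_n(d)\uhp I_n=d\uhp I_n$ and $\psi_n(d)\uhp(\lambda\setminus I_n)=\varphi_n(d)$ yields a $\sigma$-closed-discrete dense subset $\bigcup_n\ran(\psi_n)$ of $Y$ by the exact same verification as in the proof of Lemma \ref{power0} (each $\ran(\psi_n)$ is closed discrete because $E_n$ is, and the union is dense because every basic open set has finite support contained in some $I_n$).

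To produce each $E_n$ I would use $\lambda\in\mc{M}^*$ together with the closed discrete copies of $D(\omega)$ (and, more generally, of $D(\nu)$ for $\nu\leq e(Y_\alpha)$) in each $Y_\alpha$: matching factors of the witness product $\prod_{\nu\in\lambda\cap\card}D(\nu)^\lambda$ with indices $\alpha\in\lambda\setminus I_n$ whose $Y_\alpha$ admits a closed discrete copy of $D(\nu)$ yields a closed embedding of the witness into $\prod_{\alpha\in\lambda\setminus I_n}Y_\alpha$, thus transferring a closed discrete set of size $\lambda$. The principal obstacle is to simultaneously control $d(Y)$---which one expects to equal $\lambda$ under the reductions via an HMP-style density estimate---and carry out the matching when the extents $e(Y_\alpha)$ are bounded by some $\mu<\lambda$ for most $\alpha$; in that constrained case one needs to verify that $D(\mu)^\lambda$ already witnesses $\lambda\in\mc{M}^*$, perhaps by a secondary application of the equivalent formulation or of minimality. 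Bridging this abstract combinatorial content of $\lambda\in\mc{M}^*$ with the concrete topological structure of the factors $Y_\alpha$ is the technically delicate step of the proof.
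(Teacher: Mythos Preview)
Your overall architecture---reduce, then splice dense sets against closed discrete sets as in Lemma~\ref{power0} (the product version is Lemma~\ref{glue})---matches the paper's, and both the regularity reduction and the gluing step are sound. But you have correctly identified, without resolving, the genuine obstruction: if the extents $e(Y_\alpha)$ are bounded by some $\mu<\lambda$ for $\lambda$ many $\alpha$, you cannot embed the $\mc{M}^*$-witness $\prod\{D(\nu)^\lambda:\nu\in\lambda\cap\card\}$ as a closed subspace of $\prod_{\alpha\in\lambda\setminus I_n}Y_\alpha$. Your tentative fix---show that $D(\mu)^\lambda$ already carries a closed discrete set of size $\lambda$---does not follow from $\lambda\in\mc{M}^*$ alone, and no appeal to minimality or to the logical reformulation closes this gap. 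Nor does the density claim $d(Y)=\lambda$ survive without further argument once the factors are arbitrary $e$-separable spaces rather than discrete.

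The paper dissolves both issues by a reduction you omit: Lemma~\ref{lemmahmp} shows it suffices to treat the case where every $X_\alpha$ is \emph{discrete}. Then $e(X_\alpha)=|X_\alpha|$, and ``bounded extent'' becomes ``bounded cardinality''. After splitting off the factors of size $\ge\lambda$ (whose product is $e$-separable by Lemma~\ref{reg}) and grouping the remaining factors as $Y_\nu=\prod\{X_\alpha:|X_\alpha|=\nu\}$, each $Y_\nu$ is $e$-separable by Proposition~\ref{cpower}; minimality of $\lambda$ then forces the set $I$ of cardinalities $\nu$ that occur to have size $\lambda$, hence to be cofinal in $\lambda$. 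This simultaneously pins down the density as exactly $\lambda$ (Observation~\ref{dens}) and makes the closed embedding of the $\mc{M}^*$-witness into $\prod\{Y_\nu:\nu\in I_n\}$ immediate, after which Lemma~\ref{glue} finishes. Your separable/non-separable split is correct but too weak: a non-separable $e$-separable space is only guaranteed to contain a closed discrete copy of $D(\omega)$, which is far short of what the embedding requires.
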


Let us mention that $\mc L_{\mathfrak c,\omega}$ is not weakly compact \cite{joel} and so $\lambda<\mathfrak c$ in the previous theorem. In any case, if $\mc L_{\lambda,\omega}$ is weakly compact then $\lambda$ is weakly compact in $L$ by Theorem \ref{joel}. In turn, we have the following result:

\begin{corol}\label{main2cor} If there is a non-$e$-separable product of at most $\mathfrak c$ many $e$-separable spaces then there is a weakly compact cardinal in $L$.
\end{corol}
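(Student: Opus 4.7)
The plan is essentially to chain together the two deep results already cited in the excerpt, namely Theorem \ref{main2} (whose proof is the hard analytic/combinatorial core of the paper) and Theorem \ref{joel} (the Cody--Cox--Hamkins--Johnstone result). The work of Corollary \ref{main2cor} itself is routine: one just needs to extract a well-defined cardinal from the hypothesis.

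First I would use the hypothesis to isolate a minimal cardinal witness. Namely, by assumption the collection
\[
  \mc C=\{\mu\le\mathfrak c : \text{some product of } \mu \text{ many } e\text{-separable spaces fails to be } e\text{-separable}\}
\]
is non-empty. Since the cardinals below $\mathfrak c^+$ are well-ordered, $\mc C$ admits a minimum element $\lambda$; of course $\lambda\le\mathfrak c$.

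Next I would invoke Theorem \ref{main2} with this $\lambda$. That theorem tells us precisely that such a minimal $\lambda$ must lie outside Mr\'owka's class $\mc M^*$; equivalently, the infinitary language $\mc L_{\lambda,\oo}$ is weakly compact. This is the step that does all the real work, but it is already proved in the excerpt, so I may simply apply it.

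Finally I would apply Theorem \ref{joel}: whenever $\mc L_{\lambda,\oo}$ is weakly compact, $\lambda$ is a weakly compact cardinal in the constructible universe $L$. In particular $L$ contains a weakly compact cardinal, which is the desired conclusion. Since the argument is a pure chaining of cited theorems, there is no genuine obstacle; the only point to check is the existence of the minimum $\lambda$, and that is immediate from the well-ordering of cardinals (note also the remark following Theorem \ref{main2} that $\lambda<\mathfrak c$, which is stronger than what we need here).
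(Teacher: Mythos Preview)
Your proposal is correct and is essentially identical to the paper's argument: the paper derives the corollary directly from Theorem~\ref{main2} and Theorem~\ref{joel} via the same chain, taking the minimal $\lambda$, concluding that $\mc L_{\lambda,\omega}$ is weakly compact, and then invoking the Cody--Cox--Hamkins--Johnstone result. There is nothing to add or correct.
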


By combining Corollaries \ref{main1cor} and \ref{main2cor}, we obtain:

\begin{corol}\label{maincor}
  The following statements are equiconsistent relative to ZFC:
  \begin{itemize}
  \item[$(a)$]
    there is a product of at most $\mathfrak c$ many $e$-separable spaces that fails to be $e$-separable;
  \item[$(b)$]
    there is a weakly compact cardinal.
  \end{itemize}
\end{corol}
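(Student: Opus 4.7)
The plan is to deduce the equiconsistency by directly assembling Corollaries \ref{main1cor} and \ref{main2cor}, which already supply the two one-way relative consistency implications between statements $(a)$ and $(b)$; the present corollary is essentially a transparent repackaging of those two, so the strategy is to verify that each direction lines up with no missing step.

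For the implication $\mathrm{Con}(\mathrm{ZFC}+(b)) \Rightarrow \mathrm{Con}(\mathrm{ZFC}+(a))$, I would invoke Corollary \ref{main1cor}. Starting from a model of $\mathrm{ZFC}$ containing a weakly compact cardinal $\lambda$, Theorem \ref{cud} passes to the Cohen extension by $\mathbb{C}_{\lambda^+}$, in which $\mc L_{\lambda,\omega}$ remains weakly compact and consequently $\lambda \in \mathfrak{c}\setminus \mc M^*$. Lemma \ref{main1} then yields a family of $\lambda$ discrete spaces whose product is not $e$-separable; since $\lambda \le \mathfrak c$ and every discrete space is trivially $e$-separable (the whole underlying set is itself closed and discrete, giving a dense $\sigma$-closed-discrete subset), this family witnesses $(a)$ in the extension. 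This gives a model of $\mathrm{ZFC}+(a)$.

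For the converse $\mathrm{Con}(\mathrm{ZFC}+(a)) \Rightarrow \mathrm{Con}(\mathrm{ZFC}+(b))$, I would appeal to Corollary \ref{main2cor} together with Theorem \ref{joel}. If $V \models \mathrm{ZFC}+(a)$, then by Corollary \ref{main2cor} there is a weakly compact cardinal in $L^V$; since $L^V \models \mathrm{ZFC}$, this produces a model of $\mathrm{ZFC}$ containing a weakly compact cardinal, establishing $\mathrm{Con}(\mathrm{ZFC}+(b))$.

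There is no real obstacle here, since all the conceptual work lives in the two prerequisite corollaries (and, through them, in Lemma \ref{main1}, Theorems \ref{cud} and \ref{joel}). The only small points to verify are routine: that discrete spaces are automatically $e$-separable (so that the witnesses produced on the forward direction qualify under the hypothesis of $(a)$, which allows arbitrary $e$-separable factors), and that the classical passage from a weakly compact cardinal in $L$ back to a set-forcing or inner-model-theoretic consistency statement is entirely standard. Once these are noted, the corollary is a two-line proof combining the two preceding corollaries.
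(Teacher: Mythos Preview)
Your proposal is correct and matches the paper's approach exactly: the paper simply states that the corollary follows by combining Corollaries \ref{main1cor} and \ref{main2cor}, with no further argument given. Your additional remarks (that discrete spaces are trivially $e$-separable, and that $L^V\models\mathrm{ZFC}$ furnishes the desired model) are accurate and merely make explicit what the paper leaves implicit.
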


Let us now turn to proving Theorem \ref{main2}. First, we start by reducing the problem to products of discrete spaces again:

\begin{lemma}
\label{lemmahmp}

Suppose that $\kappa\le\mathfrak{c}$. Then the following are equivalent:

\begin{itemize}

\item[$(a)$]
every product of at most $\kappa$ many $e$-separable spaces is
$e$-separable;

\item[$(b)$]
every product of at most $\kappa$ many discrete spaces is
$e$-separable.

\end{itemize}

\end{lemma}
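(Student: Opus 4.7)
The direction (a) $\Rightarrow$ (b) is immediate, since each discrete space is its own closed discrete dense subspace. The interesting direction is (b) $\Rightarrow$ (a), for which the plan is a two-level construction: an outer HMP-style parametrization borrowed from Alas's construction in Proposition \ref{cpower}, together with an inner appeal to hypothesis (b) applied slice-by-slice to compensate for the fact that different coordinates may now carry different $e$-separable spaces.

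Concretely, let $(X_\alpha)_{\alpha<\kappa}$ be $e$-separable and fix for each $\alpha$ a $\sigma$-closed-discrete dense $\bigcup_{n\in\omega}D_\alpha^n\subseteq X_\alpha$, along with a distinguished point $p_\alpha\in X_\alpha$ which we place inside every $D_\alpha^n$ (achievable by enlarging each $D_\alpha^n$ by $\{p_\alpha\}$, which preserves closedness and discreteness in $T_1$ spaces). Following Alas, pick $Y\subseteq\mathbb R$ with $|Y|=\kappa$, enumerate $Y=\{y_\alpha:\alpha<\kappa\}$, fix a countable base $\mathcal B$ for $Y$, and form the countable parameter set $T=\bigcup_{n\in\omega}(S_n\times{}^n\omega)$. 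For $t=((B_0,\dots,B_{n-1}),(k_0,\dots,k_{n-1}))\in T$ and $I_i=\{\alpha<\kappa:y_\alpha\in B_i\}$, let $E_t$ be the set of all $x\in\prod_\alpha X_\alpha$ satisfying $x(\alpha)\in D_\alpha^{k_i}$ for $\alpha\in I_i$ and $x(\alpha)=p_\alpha$ for $\alpha\notin\bigcup_{i<n}I_i$. Unlike the power-space case, $E_t$ need not be closed discrete -- but it is closed in $\prod_\alpha X_\alpha$ (an intersection of projection-preimages of closed sets), and as a subspace it is canonically homeomorphic to $\prod_{\alpha\in\bigcup_{i<n}I_i}D_\alpha^{k_{i(\alpha)}}$, a product of at most $\kappa$ many discrete spaces. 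Hypothesis (b) then supplies a $\sigma$-closed-discrete dense subset $F_t\subseteq E_t$; since $E_t$ is closed in the ambient product, $F_t$ remains $\sigma$-closed-discrete in $\prod_\alpha X_\alpha$.

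A routine HMP density argument shows $\bigcup_{t\in T}E_t$ is dense in $\prod_\alpha X_\alpha$: given a basic open neighbourhood $\prod_\alpha U_\alpha$ with $U_\alpha=X_\alpha$ off a finite $F\subseteq\kappa$, Hausdorffness of $Y$ yields pairwise disjoint $B_{i(\alpha)}\in\mathcal B$ containing $y_\alpha$ for $\alpha\in F$, and density of each $D_\alpha$ then lets us pick $k_{i(\alpha)}$ with $D_\alpha^{k_{i(\alpha)}}\cap U_\alpha\neq\emptyset$ (the insistence that $p_\alpha\in D_\alpha^n$ for every $n$ ensures the slices $E_t$ are non-empty). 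Consequently $\bigcup_{t\in T}F_t$ is dense in $\prod_\alpha X_\alpha$, and as a countable union of $\sigma$-closed-discrete sets it is itself $\sigma$-closed-discrete, establishing $e$-separability. The only step of substance is recognising that each $E_t$ is, up to homeomorphism, a product of at most $\kappa$ many discrete spaces -- at which point (b) does the real work -- so I do not anticipate any further obstacle.
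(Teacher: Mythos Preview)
Your argument is correct and is essentially the paper's own proof: define closed ``slices'' $E_t$ (the paper calls them $Y_t$) parametrized by the same countable HMP index set, observe that each is homeomorphic to a product of at most $\kappa$ discrete spaces and is closed in $\prod_\alpha X_\alpha$, apply (b) to each slice, and lift the resulting $\sigma$-closed-discrete dense sets to the ambient product. The only cosmetic difference is your insertion of $p_\alpha$ into every $D_\alpha^n$, which is harmless but unnecessary---empty slices do not interfere with density.
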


\begin{proof}

The implication $(a)\Rightarrow(b)$ holds trivially. We prove
$(b)\Rightarrow(a)$.

Let $X=\prod\{X_\alpha:\alpha\in Y\}$, where $Y\subseteq\mathbb{R}$ has
cardinality at most $\kappa$ and each $X_\alpha$ is $e$-separable. For
each $\alpha\in Y$, fix a point $p_\alpha\in X_\alpha$ and a sequence
$(E^\alpha_k)_{k\in\omega}$ of closed discrete subsets of $X_\alpha$
with $\overline{\bigcup_{k\in\omega}E^\alpha_k}=X_\alpha$.

Fix a countable base $\mathcal{B}$ for $Y$ and, for each
$n\in\omega$, consider
$$
S_n=\{(B_i)_{i<n}\in\mbox{}^n\mathcal{B}:\forall
i,j<n\;(i\neq j\Rightarrow B_i\cap B_j=\emptyset)\};
$$
now, for each $t=((B_i)_{i<n},(k_0,\dots,k_{n-1}))\in S_n\times\mbox{}^n\omega$, define
$Y_t$ to be the set of those $x\in X$ so that 
 \[
   x(\alpha) = \begin{cases}
		      x'_\alpha & \text{ for some } x'_\alpha\in E^\alpha_{k_i} \text{ for } \alpha\in B_i \text { and }i<n, \text{ and}\\  
p_\alpha, & \text{for } \alpha\in Y\setminus\bigcup_{i=0}^{n-1}B_i.
   
        \end{cases}
  \]
Note that each $Y_t$ is homeomorphic to the product
$\prod_{i<n}\prod_{\alpha\in B_i}E^\alpha_{k_i}$. Hence $Y_t$ is is
$e$-separable by $(b)$. Let $(D^t_k)_{k\in\omega}$ be a sequence
of closed discrete subsets of $Y_t$ with
$\overline{\bigcup_{k\in\omega}D^t_k}=Y_t$.
Since each $Y_t$ is closed in $X$, we have that each $D^t_k$ is a
closed discrete subset of $X$. Finally, as
$\bigcup_{n\in\omega}\bigcup_{r\in S_n\times\mbox{}^n\omega}Y_t$ is 
dense in $X$, it follows that
$$
\overline{\bigcup_{n\in\omega}\bigcup_{t\in
S_n\times\mbox{}^n\omega}\bigcup_{k\in\omega}D^t_k}=X,
$$
thus showing that $X$ is $e$-separable.
\end{proof}

Note that we immediately get the following easy:

\begin{corol}\label{finiteprod}
 The product of finitely many $e$-separable spaces is $e$-separable.
\end{corol}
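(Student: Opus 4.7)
The plan is to deduce this corollary directly from Lemma \ref{lemmahmp} applied with $\kappa$ finite. The lemma reduces statement $(a)$ for $\kappa$ factors to statement $(b)$: it suffices to check that every product of finitely many discrete spaces is $e$-separable. But a finite product of discrete spaces is itself discrete, and any discrete space $X$ is trivially $e$-separable since $X$ itself is a closed discrete subspace dense in $X$ (take $D_0 = X$ in Definition \ref{e-sep}). So condition $(b)$ of Lemma \ref{lemmahmp} holds for every finite $\kappa$, and $(a)$ then delivers the corollary.

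For the reader who prefers a direct argument, one can instead proceed by induction on the number of factors, reducing to the case of two. Given $e$-separable spaces $X, Y$ with witnessing sequences $(D_n)_{n\in\omega}$ in $X$ and $(E_m)_{m\in\omega}$ in $Y$, the countable family $\{D_n\times E_m:n,m\in\omega\}$ witnesses $e$-separability of $X\times Y$: each $D_n\times E_m$ is closed in $X\times Y$ as the intersection $(D_n\times Y)\cap(X\times E_m)$ of two closed sets, and discrete as a subspace, because for any $(d,e)\in D_n\times E_m$ one can pick open $U\ni d$ with $U\cap D_n=\{d\}$ and open $V\ni e$ with $V\cap E_m=\{e\}$, so that $(U\times V)\cap(D_n\times E_m)=\{(d,e)\}$. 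Finally $\bigcup_{n,m}(D_n\times E_m)=\bigl(\bigcup_n D_n\bigr)\times\bigl(\bigcup_m E_m\bigr)$ is dense in $X\times Y$ since each factor is dense in the corresponding coordinate.

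There is no real obstacle: the corollary is essentially immediate, and the two-factor argument only exploits the fact that a product of finitely many closed discrete subspaces is closed discrete. This is precisely the feature that fails for infinite products, which is why the full analogue of the Hewitt--Marczewski--Pondiczery theorem in the $e$-separable setting requires the substantially heavier large-cardinal machinery developed in the remainder of Section \ref{pressec}.
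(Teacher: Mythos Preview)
Your proposal is correct, and your first paragraph is exactly the paper's argument: the corollary is stated immediately after Lemma \ref{lemmahmp} with the remark that it follows at once, precisely because a finite product of discrete spaces is discrete and hence trivially $e$-separable, so condition $(b)$ holds. Your alternative direct two-factor argument is also valid and is perhaps the more self-contained route, since it avoids checking that the proof of Lemma \ref{lemmahmp} goes through for finite $\kappa$; but the paper clearly intends the one-line deduction from the lemma.
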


Second, we show that as long as we take the product of large discrete sets relative to the number of terms, we end up with an $e$-separable product:

\begin{lemma}
\label{reg}

Let $\kappa$ be an infinite cardinal. Then the product of
at most
$\kappa$ many discrete spaces of cardinality at least
$\kappa$ is $e$-separable.

\end{lemma}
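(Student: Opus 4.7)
My plan is to mimic the construction of Lemma \ref{power0}, working with an arbitrary product rather than a power. Dispose of the trivial cases first: if $|I|$ is finite, $X$ is itself discrete; if $|I|=\oo$, $X$ is a countable product of discrete spaces, hence metrizable; in both cases $X$ is $e$-separable. Assume henceforth that $\lambda=|I|$ is uncountable, and let $\mu=\sup_{\alpha\in I}|X_\alpha|\ge\kappa$. The Hewitt--Marczewski--Pondiczery theorem (valid since $\lambda\le\kappa\le\mu\le 2^\mu$) gives $d(X)=\mu$, and we fix a dense set $D\subseteq X$ with $|D|=\mu$.

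The next step is to partition $I=\bigsqcup_{n\in\oo}I_n$ with $|I_n|=\lambda$, arranged so that for every $n$ the tail $J_n=I\setm\bigcup_{m\le n}I_m$ still satisfies $\sup_{\alpha\in J_n}|X_\alpha|=\mu$: this is achieved by first fixing a cofinal sequence $(\gamma_k)_{k<\text{cf}(\mu)}$ of coordinates with $|X_{\gamma_k}|$ cofinal in $\mu$, and distributing the $\gamma_k$'s across the $I_n$'s so that each $J_n$ retains cofinally many. The crucial remaining task is to produce, for each $n$, a closed discrete subset $F_n\subseteq\prod_{\alpha\in J_n}X_\alpha$ of cardinality $\mu$. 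When $\mu$ is attained at some $\alpha^*\in J_n$, this is immediate: fix $p_\alpha\in X_\alpha$ for $\alpha\in J_n\setm\{\alpha^*\}$ and take $F_n=X_{\alpha^*}\times\{(p_\alpha)_{\alpha\ne\alpha^*}\}$, a closed discrete copy of $X_{\alpha^*}$ of size $\mu$. In the non-attained limit case, one builds $F_n=\{y_\beta:\beta<\mu\}$ via a diagonal construction along $(\gamma_k)\cap J_n$, encoding each label $\beta<\mu$ at multiple coordinates carefully enough to kill all possible accumulation points.

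Once the $F_n$'s are in place, fix bijections $\varphi_n:D\to F_n$ and define $\psi_n:D\to X$ by $\psi_n(d)(\xi)=d(\xi)$ for $\xi\in K_n:=\bigcup_{m\le n}I_m$ and $\psi_n(d)(\xi)=\varphi_n(d)(\xi)$ for $\xi\in J_n$. Exactly as in the proof of Lemma \ref{power0}, each $\ran(\psi_n)$ is closed discrete in $X$: any $x\in X$ has a basic neighborhood in $\prod_{\alpha\in J_n}X_\alpha$ around $x\uhp J_n$ isolating a single point of $F_n$, which pulls back to a basic neighborhood of $x$ in $X$ meeting $\ran(\psi_n)$ in at most one point. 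Moreover $E=\bigcup_n\ran(\psi_n)$ is dense in $X$: any basic open $[\varepsilon]$ has $\dom(\varepsilon)\subseteq K_n$ for some $n$, and then $\psi_n(d)\in[\varepsilon]$ for any $d\in D\cap[\varepsilon]$.

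The main obstacle is producing $F_n$ in the non-attained limit case. The naive ``one closed-discrete set per large coordinate'' union $\bigcup_{\alpha}X_\alpha\times\{(p_\beta)_{\beta\ne\alpha}\}$ has cardinality $\mu$ but is not closed discrete --- the reference point $(p_\alpha)_{\alpha\in J_n}$ is an accumulation point of it --- so the labels $\beta<\mu$ must be encoded across multiple cofinal coordinates (and the ``background'' values must be perturbed carefully as $\beta$ varies) in order to rigorously eliminate this and every other potential limit point of the diagonal set.
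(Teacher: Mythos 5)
Your overall strategy --- manufacture closed discrete sets of size $d(X)$ in tail subproducts and glue them with the $\psi_n$ machinery of Lemma \ref{power0} --- is genuinely different from the paper's proof, which constructs the dense $\sigma$-closed-discrete set directly: for each pair $(i,j)$ it takes the points that are ``large'' (i.e.\ $\ge\kappa$) on an $i$-element set $F$, agree with a prescribed finite partial function $p$ into $\kappa$ of size $j$, and are constantly equal to an injective tag $\varphi(F,p)<\kappa$ on all remaining coordinates; the tag makes each such set closed discrete, and no closed discrete set of size $d(X)$ is ever needed. Your reduction has a gap that is more than cosmetic: the partition you require need not exist. If $\mu=\sup_{\alpha}|X_\alpha|$ is attained at only finitely many coordinates and the supremum over the remaining coordinates is some $\mu'<\mu$, then every tail $J_n$ with $n$ large misses all the big coordinates, so $\sup_{\alpha\in J_n}|X_\alpha|=\mu'$ and $|\prod_{\alpha\in J_n}X_\alpha|\le(\mu')^{\kappa}$, which can be strictly smaller than $\mu=d(X)$: take $\kappa=\lambda=\oo_1$, one factor $D(\beth_{\oo_1})$ and all other factors $D(\oo_1)$, so that every tail product has cardinality $2^{\oo_1}<\beth_{\oo_1}$. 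No tail then carries a closed discrete set of size $\mu$ and the scheme cannot start; this configuration needs a separate argument (e.g.\ splitting off the finitely many huge factors, noting that $D(\mu)\times Y$ is a topological sum of copies of $Y$ and so inherits $e$-separability --- but the remaining product is then again an instance of the lemma, so some induction on the cardinality profile is required).

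Second, even where your partition does exist, the construction of $F_n$ in the non-attained case is exactly where the difficulty of the lemma is concentrated, and you only gesture at it (``encoding each label at multiple coordinates carefully enough''). It can be carried out --- for $\beta<\mu$ set $k(\beta)=\min\{k:\beta<|X_{\gamma_k}|\}$, record $k(\beta)$ at one fixed coordinate of $J_n$ (possible since $\mathrm{cf}(\mu)\le\lambda\le\kappa\le|X_\alpha|$ for every $\alpha$) and record $\beta$ itself at the coordinate $\gamma_{k(\beta)}$; then any point of the tail product has a basic neighbourhood determined by those two coordinates that meets $F_n$ in at most two points --- but as written this step is missing, and it is precisely the content that the paper's tag function $\varphi$ supplies uniformly for all distributions of the cardinalities $|X_\alpha|$.
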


\begin{proof}

Let $X=\prod\{X_\alpha:\alpha\in\lambda\}$, where $\lambda\le\kappa$
and each $X_\alpha$ is a discrete space with cardinality at least
$\kappa$. We can assume that $\lambda$ is infinite and that
$X_\alpha=|X_\alpha|$ for all $\alpha\in\lambda$. 

Define
$$
P^i_j=\{(F,p)\in[\lambda]^i\times
Fn(\lambda,\kappa):|p|=j\textrm{ and }F\cap\mathrm{dom}(p)=\emptyset\}
$$
for each $i,j\in\omega$ where $Fn(\lambda,\kappa)$ denotes the set of finite partial functions from $\lambda$ to $\kappa$. Fix an injective function
$\varphi:\bigcup_{i,j\in\omega}P^i_j\rightarrow\kappa$
such that $\varphi(F,p)>\max(\mathrm{ran}(p))$ for every
$(F,p)\in\bigcup_{i,j\in\omega}P^i_j$.

Now, for every $i,j\in\omega$, let $E^i_j$ be the set of all $x\in X$
for which there is $(F,p)\in P^i_j$ satisfying
\begin{enumerate}
\item $x(\xi)\ge\kappa$ for all $\xi\in F$,
\item $x\in [p]$, and
\item $x(\xi)=\varphi(F,p)$ for all $\xi\in\lambda\setminus(F\cup\mathrm{dom}(p))$.
\end{enumerate}

It is straightforward to verify that $\bigcup_{i,j\in\omega}E^i_j$ is
dense in $X$. We claim that each $E^i_j$ is a closed discrete subset
of $X$, which will conclude our proof.

From this point on, let $i,j\in\omega$ be fixed.

To see that $E^i_j$ is discrete, pick an arbitrary $x\in E^i_j$, and
let this be witnessed by the pair $(F,p)\in P^i_j$. Note that the choice of $\varphi$ ensures that this $(F,p)$ is unique. Pick any
$\eta\in\lambda\setminus(F\cup\mathrm{dom}(p))$ and let $$V=[x\uhp (\dom(p)\cup F\cup \{\eta\})].$$ Then $V$ is an open
neighbourhood of $x$ in $X$ satisfying $E^i_j\cap V=\{x\}$.

It remains to show that $E^i_j$ is closed in $X$. Let then $y\in
X\setminus E^i_j$; we must find an open neighbourhood $V$ of $y$ in
$X$ such that $V\cap E^i_j=\emptyset$. We shall do so by considering
several cases.\\
\emph{$\cdot$ Case 1.}
$G=\{\xi\in\lambda:y(\xi)\ge\kappa\}$ has more than $i$ elements.

Then we may take any $H\in[G]^{i+1}$ and define $V=[y\uhp H]$.\\
\emph{$\cdot$ Case 2.}
$G=\{\xi\in\lambda:y(\xi)\ge\kappa\}$ has cardinality at most $i$.

We will split this case in two:\\
\emph{$\cdot$ Case 2.1.}
$\mathrm{ran}(y)\cap\kappa$ is infinite.

Then we can take $A\in[\kappa]^{j+2}$ such that
$y''A\in[\kappa]^{j+2}$ and define $V=[y\uhp A]$.\\
\emph{$\cdot$ Case 2.2.}
$\mathrm{ran}(y)\cap\kappa$ is finite.

Let $\mu=\max(\mathrm{ran}(y)\cap\kappa)$ and
$H=\{\xi\in\lambda:y(\xi)<\mu\}$.
We divide this case into three subcases:\\
\emph{$\cdot$ Case 2.2.1.}
$|H|>j$.

Pick $H'\in[H]^{j+1}$ and $\beta\in\lambda$ such that
$y(\beta)=\mu$. Now take $V=[g\uhp (H'\cup\{\beta\})]$.\\
\emph{$\cdot$ Case 2.2.2.}
$|H|\le j$ and $\mu\notin\mathrm{ran}(\varphi)$.

Let $B\in[\lambda]^{j+1-|H|}$ be such that
$y''B=\{\mu\}$ and consider $V=[g\uhp (H\cup B)]$.\\
\emph{$\cdot$ Case 2.2.3.}
$|H|\le j$ and $\mu\in\mathrm{ran}(\varphi)$.

Let $(F,p)\in P^i_j$ be such that $\varphi(F,p)=\mu$ and, as in the
previous case, take $B\in[\lambda]^{j+1-|H|}$ satisfying
$y''B=\{\mu\}$. Now define
$$V=[g\uhp (G\cup H\cup B\cup F\cup\mathrm{dom}(p))].$$
Suppose, in order to get a contradiction, that there is $x\in V\cap
E^i_j$ and let $(F',p')\in P^i_j$ witness that $x\in E^i_j$. Since
$|H\cup B|=j+1$ and $x''(H\cup B)=y''(H\cup B)\subseteq\kappa$, we have
that $\varphi(F',p')=\max(x''(H\cup B))=\max(y''(H\cup B))=\mu$. Hence
$(F',p')=(F,p)$ by injectivity of $\varphi$. Now, since
$F=\{\xi\in\lambda:x(\xi)\ge\kappa\}$ and
$G=\{\xi\in\lambda:y(\xi)\ge\kappa\}$, it follows from
$x\upharpoonright(F\cup G)=y\upharpoonright(F\cup G)$ that
$F=G$. Similarly, as $H=\{\xi\in\lambda:y(\xi)<\mu\}$ and
$\mathrm{dom}(p)=\{\xi\in\lambda:x(\xi)<\mu\}$, it follows from
$x\upharpoonright(H\cup\mathrm{dom}(p))=y\upharpoonright(H\cup\mathrm{dom}(p))$
that $H=\mathrm{dom}(p)$. Thus the pair
$(G,y\!\upharpoonright\!H)=(F,p)\in P^i_j$ witnesses that $y\in
E^i_j$, a contradiction.
\end{proof}

Finally, we are ready to present

\begin{proof}[Proof of Theorem \ref{main2}]
 Suppose that $\lambda\leq \mathfrak{c}$ is minimal so that there are $e$-separable spaces $X_\alpha$ such that $X=\prod \{X_\alpha:\alpha<\lambda\}$ is not $e$-separable. By Lemma \ref{lemmahmp}, we can suppose that each  $X_\alpha$ is discrete.

Note that $$X\simeq\prod \{X_\alpha: \alpha<\lambda, |X_\alpha|<\lambda\}\times \prod \{X_\alpha: \alpha<\lambda, |X_\alpha|\geq \lambda\}.$$ We know that the second term on the right-hand side is $e$-separable by Lemma \ref{reg}. So if $X$ is not $e$-separable then $\prod \{X_\alpha: \alpha<\lambda, |X_\alpha|<\lambda\}$ is not $e$-separable either by Corollary \ref{finiteprod}.

Now, we define $Y_\nu=\prod\{X_\alpha:\alpha<\lambda, |X_\alpha|=\nu\}$ for $\nu\in \lambda\cap \card$. Note that $Y_\nu$ is $e$-separable by Theorem \ref{cpower}. Hence, the minimality of $\lambda$ implies that $I=\{\nu\in \lambda\cap \card:Y_\nu\neq\emptyset\}$ has size $\lambda$; otherwise $X\simeq\prod\{Y_\nu:\nu\in I\}$ is a smaller non-$e$-separable product of $e$-separable spaces. Note that this already shows that $\lambda=\oo_\lambda$. 


Let us suppose that $\lambda \in \mc M^*$; we will arrive at a contradiction shortly. Take a decreasing sequence $(I_n)_{n\in\oo}$ of subsets of $I$ so that $\bigcap\{I_n:n\in\oo\}=\emptyset$ and $\lambda=|I_n|=|I\setm I_n|$ for each $n\in\oo$. Note that $d(\prod\{Y_\nu:\nu\in I\setm I_n\})=\lambda$ by Observation \ref{dens}.

\begin{clm} $\prod\{Y_\nu:\nu\in I_n\}$ contains a closed discrete set of size $\lambda$. 
\end{clm}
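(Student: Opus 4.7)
The strategy is to lift a large closed discrete set from a product witnessing $\lambda\in\mc M^*$. By the definition of $\mc M^*$, fix discrete spaces $(Z_\alpha)_{\alpha<\lambda}$ each of size $<\lambda$ so that $Z:=\prod\{Z_\alpha:\alpha<\lambda\}$ contains a closed discrete set of cardinality $\lambda$; it therefore suffices to produce a closed embedding of $Z$ into $\prod\{Y_\nu:\nu\in I_n\}$.

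The crucial point is that $I_n$ is unbounded in $\lambda$: since $\lambda=\oo_\lambda$ is a limit cardinal, any subset of $\lambda\cap\card$ bounded by some $\mu<\lambda$ has cardinality at most $|\mu\cap\card|\le\mu<\lambda$, contradicting $|I_n|=\lambda$. Consequently $|\{\nu\in I_n:\nu\ge\mu\}|=\lambda$ for every $\mu<\lambda$, and a transfinite recursion of length $\lambda$ yields an injection $f:\lambda\to I_n$ with $|Z_\alpha|\le f(\alpha)$ for each $\alpha<\lambda$: at stage $\alpha$ the set $\{\nu\in I_n:\nu\ge|Z_\alpha|\}$ has size $\lambda$, while the already-used values $f''\alpha$ form a set of size $<\lambda$.

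It remains to convert $f$ into the desired closed embedding. For each $\nu\in I_n$ pick (using $\nu\in I$) some $\alpha_\nu<\lambda$ with $|X_{\alpha_\nu}|=\nu$ together with a point $q_\nu\in\prod\{X_\alpha:\alpha<\lambda,\,|X_\alpha|=\nu,\,\alpha\ne\alpha_\nu\}$; when $\nu=f(\alpha)$ fix an injection $\iota_\alpha:Z_\alpha\hookrightarrow X_{\alpha_\nu}$, and when $\nu\in I_n\setm\ran(f)$ fix any $p_\nu\in X_{\alpha_\nu}$. Define $\iota:Z\to\prod\{Y_\nu:\nu\in I_n\}$ by letting the $\nu$-th coordinate of $\iota(z)$ agree with $q_\nu$ at all positions $\alpha\ne\alpha_\nu$ and equal $\iota_\alpha(z(\alpha))$ (resp.\ $p_\nu$) at position $\alpha_\nu$ when $\nu=f(\alpha)$ (resp.\ $\nu\notin\ran(f)$). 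Because each $X_\alpha$ is discrete, the image $\iota(Z)$ is carved out by closed conditions on every coordinate and is therefore closed in $\prod\{Y_\nu:\nu\in I_n\}$; it is plainly homeomorphic to $Z$ via $\iota$, so the image of any closed discrete subset of $Z$ of size $\lambda$ yields the required closed discrete set in $\prod\{Y_\nu:\nu\in I_n\}$. The only step warranting a second look is the closedness of $\iota(Z)$, which is a routine slice computation in the spirit of Lemma \ref{power0} and Proposition \ref{cpower}.
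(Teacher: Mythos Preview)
Your proof is correct and follows essentially the same approach as the paper: both arguments exploit the unboundedness of $I_n$ in $\lambda$ (which follows from $|I_n|=\lambda=\omega_\lambda$) to realise a witness for $\lambda\in\mc M^*$ as a closed subspace of $\prod\{Y_\nu:\nu\in I_n\}$. The only cosmetic difference is that the paper uses the canonical witness $Z=\prod\{D(\nu)^\lambda:\nu\in\lambda\cap\card\}$ rather than an arbitrary one, and leaves the embedding as ``routine'' where you spell out the coordinate-by-coordinate map explicitly.
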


\begin{proof} $\lambda \in \mc M^*$ implies that $Z=\prod \{D(\nu)^\lambda : \nu\in \lambda\cap \card\}$ contains a closed discrete subset of size $\lambda$. Hence, it suffices to show that $Z$ embeds into $\prod\{Y_\nu:\nu\in I_n\}$ as a closed subspace. In order to do that, note that the set $\{\nu\in I_n: \nu>\nu_0\}$ has size $\lambda$ for every $\nu_0\in \lambda\cap \card$. Now it is routine to construct the embedding of $Z$.
\end{proof}

Finally, we can apply Lemma \ref{glue} to see that the product $X=\prod\{Y_\nu:\nu\in I\}$ must be $e$-separable. This contradicts our initial assumption on $X$.
\end{proof}

\section{Final remarks and further questions}

First, referring back to Section \ref{d_and_e}, it is natural to ask if we can say something similar to Theorem \ref{prodthm} about products. Let us present a result in this direction:


\begin{lemma}\label{glue} Suppose that $\kappa$ is an infinite cardinal and there is a decreasing sequence $(I_n)_{n\in\oo}$ of non-empty subsets of $\kappa$ with empty intersection such that $\prod\{X_\alpha:\alpha\in I_n\}$ contains a closed discrete subset of size $\delta_n=d(\prod\{X_\alpha:\alpha\in \kappa\setm I_n\})$ for every $n\in\oo$. Then $X=\prod\{X_\alpha:\alpha<\kappa\}$ is $e$-separable.
\end{lemma}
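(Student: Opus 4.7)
The plan is to imitate the gluing construction from Lemma \ref{power0}, swapping the roles of the sets over which we take dense / closed discrete pieces so as to match the hypotheses. For each $n\in\omega$, let $\delta_n=d(\prod\{X_\alpha:\alpha\in\kappa\setm I_n\})$; fix a dense subset $D_n\subseteq\prod\{X_\alpha:\alpha\in\kappa\setm I_n\}$ with $|D_n|=\delta_n$, and (by hypothesis) a closed discrete subset $E_n\subseteq\prod\{X_\alpha:\alpha\in I_n\}$ with $|E_n|=\delta_n$. Fix a bijection $\varphi_n\colon D_n\to E_n$ and define $\psi_n\colon D_n\to X$ by
\[
\psi_n(d)(\xi)=\begin{cases} d(\xi), & \xi\in\kappa\setm I_n,\\ \varphi_n(d)(\xi), & \xi\in I_n.\end{cases}
\]
The candidate $\sigma$-closed-discrete dense set is $E=\bigcup_{n\in\omega}\ran(\psi_n)$.

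For density, take a basic open $[\varepsilon]$ in $X$. Since $\dom(\varepsilon)$ is finite, $(I_n)$ is decreasing and $\bigcap_nI_n=\emptyset$, there is some $n$ with $\dom(\varepsilon)\cap I_n=\emptyset$, i.e.\ $\dom(\varepsilon)\subseteq\kappa\setm I_n$. By density of $D_n$ I can pick $d\in D_n$ with $d\uhp\dom(\varepsilon)=\varepsilon$; then $\psi_n(d)\in[\varepsilon]$.

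For closed discreteness of each $\ran(\psi_n)$, let $x\in X$. Using that $E_n$ is closed discrete in $\prod\{X_\alpha:\alpha\in I_n\}$, pick a basic open set $[\eta]$ of $\prod\{X_\alpha:\alpha\in I_n\}$ with $x\uhp I_n\in[\eta]$ and $|[\eta]\cap E_n|\le 1$. Since $\dom(\eta)\subseteq I_n$, the same $[\eta]$ viewed as a basic open set of $X$ contains $x$. If $\psi_n(d),\psi_n(d')\in[\eta]$ then $\varphi_n(d),\varphi_n(d')\in[\eta]\cap E_n$, so $\varphi_n(d)=\varphi_n(d')$ and hence $d=d'$ by injectivity of $\varphi_n$; thus $|[\eta]\cap\ran(\psi_n)|\le 1$.

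I do not expect a real obstacle here: the whole argument is a straightforward transcription of the Lemma \ref{power0} template, and the only genuine combinatorial input is the observation (used for density) that every finite $F\subseteq\kappa$ is eventually disjoint from $I_n$, which is exactly the content of the decreasing/empty-intersection hypothesis. Degenerate situations, such as $I_n=\kappa$ forcing $\delta_n=1$, trivialize rather than cause trouble.
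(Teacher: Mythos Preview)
Your argument is correct and essentially identical to the paper's: both pick a dense $D_n$ in the $\kappa\setminus I_n$-factor and a closed discrete set of the same size in the $I_n$-factor, glue them via a bijection, and verify density using $\bigcap_n I_n=\emptyset$ and closed discreteness by projecting onto the $I_n$-coordinates. The only cosmetic differences are that the paper enumerates $D_n$ and $F_n$ by the same index set $\delta_n$ rather than naming a bijection $\varphi_n$, and it packages your closed-discreteness step as a separate one-line Observation (a set whose projection to some factor is injective with closed discrete image is itself closed discrete).
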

\begin{proof} Let $X(J)$ denote $\prod\{X_\alpha:\alpha\in J\}$ for $J\subseteq \kappa$. Pick $D_n=\{d^n_\xi:\xi<\delta_n\}\subseteq X(\kappa\setm I_n)$ dense and let $F_n=\{f^n_\xi:\xi<\delta_n\}\subseteq X(I_n)$ be closed discrete.

Now, for each $n\in\omega$, we define $e^n_\xi\in X$ for $\xi<\delta_n$ by
 \[
   e^n_\xi(\alpha) = \begin{cases}
		      d^n_\xi(\alpha), & \text{for } \alpha\in \kappa\setm I_n, \text{ and}\\  
f^n_\xi(\alpha), & \text{for } \alpha\in I_n.
   
        \end{cases}
  \]
We claim that the set $E_n=\{e^n_\xi:\xi<\delta_n\}$ is closed discrete. This follows from
 
\begin{obs}\label{proj} Suppose that $E\subseteq \prod\{ X_\alpha: \alpha <\kappa\}$ and there is $I\subseteq \kappa$  such that $\pi_I$ is 1-1 on $E$ and the image $\pi_I\mbox{}''E$ is closed discrete in $\prod\{ X_\alpha: \alpha \in I\}$. Then $E$ is closed discrete.
\end{obs}

Now, it is clear that $\bigcup \{E_n:n\in\oo\}$ is a dense and $\sigma$-closed-discrete subset of $X$.
\end{proof}

Second, recall that if $D(\lambda)$ is the discrete space of size $\lambda\geq \kappa$ then $D(\lambda)^\kappa$ is $e$-separable by Lemma \ref{reg}. Actually, we can say a bit more in this case:

\begin{lemma}
\label{discrprod}

Let $(\kappa_i)_{i\in I}$ be a sequence of cardinals and consider the
product space $X=\prod\{D(\kappa_i):i\in I\}$.
Suppose that the set $\{i\in I:\kappa_i=\kappa\}$ is infinite, where
$\kappa=\sum_{i\in I}\kappa_i$.
Then $X$ has a $\sigma$-discrete $\pi$-base.

\end{lemma}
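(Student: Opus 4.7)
The plan is to build a $\sigma$-discrete $\pi$-base by using the infinitely many ``large'' coordinates (those $i$ with $\kappa_i=\kappa$) as injective labels that force discreteness. Fix an injective sequence $(i_n)_{n\in\omega}$ of elements of $\{i\in I:\kappa_i=\kappa\}$, which exists by hypothesis. A short observation shows that $\kappa$ must be infinite (because $\{i:\kappa_i=\kappa\}$ is infinite and every $\kappa_i\ge 1$), and consequently $|I|\le\kappa$. Hence the set $F$ of all finite partial functions $\varepsilon$ with $\dom(\varepsilon)\in[I]^{<\aleph_0}$ and $\varepsilon(i)\in D(\kappa_i)$ for each $i\in\dom(\varepsilon)$ has cardinality exactly $\kappa$, so we may fix an injection $\phi:F\to\kappa$.

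For each $\varepsilon\in F$ let $n(\varepsilon)=\min\{n\in\omega:i_n\notin\dom(\varepsilon)\}$ and define
$$U_\varepsilon=[\varepsilon\cup\{(i_{n(\varepsilon)},\phi(\varepsilon))\}],$$
which is well-defined because $\phi(\varepsilon)<\kappa=\kappa_{i_{n(\varepsilon)}}$. Group these sets by the label coordinate used: set $\mathcal{B}_n=\{U_\varepsilon:n(\varepsilon)=n\}$ and $\mathcal{B}=\bigcup_{n\in\omega}\mathcal{B}_n$.

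That $\mathcal{B}$ is a $\pi$-base is immediate: given a non-empty open $V\subseteq X$, pick any basic open $[\varepsilon]\subseteq V$; then $U_\varepsilon\subseteq[\varepsilon]\subseteq V$. The core of the argument is verifying that each $\mathcal{B}_n$ is discrete. Fix $n\in\omega$ and $x\in X$, and consider the open neighbourhood $W=[x\uhp\{i_n\}]$ of $x$. Every point of $U_\varepsilon\in\mathcal{B}_n$ has $i_n$-coordinate equal to $\phi(\varepsilon)$, so $W\cap U_\varepsilon\neq\emptyset$ forces $x(i_n)=\phi(\varepsilon)$, which by injectivity of $\phi$ determines $\varepsilon$ uniquely. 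Thus $W$ meets at most one member of $\mathcal{B}_n$.

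The only delicate point is ensuring that the codes $\phi(\varepsilon)$ actually fit inside the chosen label-coordinates — this is precisely the reason we pick the labels $i_n$ from $\{i:\kappa_i=\kappa\}$ rather than from arbitrary indices in $I$; every other step reduces to an elementary cardinality count and unpacking the definition of the product topology.
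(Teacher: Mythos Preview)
Your proof is correct and follows essentially the same idea as the paper's: use the infinitely many coordinates $i$ with $\kappa_i=\kappa$ as ``label'' coordinates into which an injective code for each finite partial function is placed, so that fixing the label coordinate isolates at most one member of the family. The only difference is bookkeeping---the paper indexes its families by pairs $(j,n)\in J\times\omega$ (a fixed countable $J\subseteq\{i:\kappa_i=\kappa\}$ together with the size $n$ of the partial function), whereas you collapse this to a single index $n$ via the ``first missing $i_n$'' rule; both organizations yield the same $\sigma$-discrete $\pi$-base mechanism.
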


\begin{proof}

Let $J$ be a countable infinite subset of $\{i\in
I:\kappa_i=\kappa\}$. Note that 
$\kappa_j=\sum_{i\in I\setminus\{j\}}\kappa_i$ for all $j\in J$. Now let $\{p^j_n(\alpha):\alpha\in\kappa_j\}$ be an
enumeration of the set $$\{p\subseteq\bigcup_{i\in
  I\setminus\{j\}}(\{i\}\times\kappa_i):p\textrm{ is a function and
}|p|=n\}$$ for every $j\in
J$ and $n\in\omega$.
Consider
$$A^j_n=\{p^j_n(\alpha)\cup\{(j,\alpha)\}:\alpha\in\kappa_j\};$$
finally, define
$\mathcal{V}^j_n=\{[q]:q\in A^j_n\}$.

Note that each $\mathcal{V}^j_n$ is a discrete family: if
$a=(a_i)_{i\in I}$ is any point of $X$ then $$U=\{(x_i)_{i\in I}\in
X:x_j=a_j\}$$ is an open neighbourhood of $a$ in $X$ such that
$$\{V\in\mathcal{V}^j_n:V\cap
U\neq\emptyset\}=\{V_{p^j_n(a_j)\cup\{(j,a_j)\}}\}.$$

Moreover, $\mathcal{V}=\bigcup_{j\in
  J}\bigcup_{n\in\omega}\mathcal{V}^j_n$ is a $\pi$-base for $X$,
since any non-empty open subset of $X$ is determined by a finite number
of coordinates which constitutes a finite subset of
$I\setminus\{j\}$ for some $j\in J$.
\end{proof}

\begin{corol}
\label{discrpower}

If $\lambda\geq \kappa$ then $D(\lambda)^\kappa$ has a $\sigma$-discrete $\pi$-base.
\end{corol}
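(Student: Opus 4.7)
The plan is to obtain this as a direct specialization of Lemma \ref{discrprod}. Given cardinals $\lambda\geq\kappa$, I would take an index set $I$ of cardinality $\kappa$ and set $\kappa_i=\lambda$ for every $i\in I$, so that $D(\lambda)^\kappa$ is canonically homeomorphic to $\prod\{D(\kappa_i):i\in I\}$. The case $\kappa<\omega$ is trivial: then either $\lambda$ is also finite and the whole space is finite discrete, or $\kappa$ is finite and $D(\lambda)^\kappa$ is a finite product of discrete spaces, hence itself discrete; in either subcase the family of singletons is a $\sigma$-discrete $\pi$-base. So we may assume $\kappa\geq\aleph_0$ throughout.

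To apply Lemma \ref{discrprod}, I need to verify its hypothesis. The sum of the factor sizes is
\[
\sum_{i\in I}\kappa_i=\kappa\cdot\lambda=\lambda,
\]
using standard cardinal arithmetic together with $\lambda\geq\kappa\geq\aleph_0$. Since every $\kappa_i$ equals $\lambda$, the set $\{i\in I:\kappa_i=\lambda\}$ coincides with $I$, which has cardinality $\kappa\geq\aleph_0$ and is therefore infinite. The conclusion of Lemma \ref{discrprod} then yields that $D(\lambda)^\kappa$ admits a $\sigma$-discrete $\pi$-base.

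There is no real obstacle here; the corollary is essentially a one-line application of the preceding lemma. The only point that deserves a moment of care is a notational overload: in the statement of Lemma \ref{discrprod} the symbol $\kappa$ is reserved for the total sum $\sum_{i\in I}\kappa_i$, whereas in Corollary \ref{discrpower} the symbol $\kappa$ plays the role of the exponent. Once this clash is disambiguated (so that the lemma's $\kappa$ becomes our $\lambda$ and the lemma's $I$ has size equal to our $\kappa$), the verification above is immediate, and no further argument is required.
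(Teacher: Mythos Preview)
Your proposal is correct and matches the paper's approach exactly: the paper states Corollary~\ref{discrpower} without proof, treating it as an immediate specialization of Lemma~\ref{discrprod}, which is precisely what you do. Your explicit handling of the finite-$\kappa$ case and the remark about the notational clash between the lemma's $\kappa$ and the corollary's $\kappa$ are helpful clarifications, but no further argument is needed.
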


Finally, selection principles (see e.g. \cite{scheep}) and selective versions of separability and $d$-separability (see e.g. \cite{weaksep}) were proved to be fascinating notions to study. So let us introduce the selective version of $e$-separability:

\begin{defin}
\label{E-sep}

A topological space $X$ is \emph{$E$-separable} if for every sequence of dense sets
 $(D_n)_{n\in\omega}$ of $X$ we can select  $E_n\subseteq D_n$ so that $E_n$ is closed discrete in $X$ and
 $\bigcup_{n\in\omega}E_n$ is dense in $X$.

\end{defin}

Note that every space with a $\sigma$-discrete $\pi$-base is $E$-separable as well. Let us point out that the example of Theorem \ref{examp} is an $e$-separable space which is not $E$-separable.

We ask the following questions:

\begin{prob}
 Suppose that $X$ is an $e$-separable space which is the product of discrete spaces. Is $X$ $E$-separable as well?
\end{prob}

\begin{prob}
How does $E$-separability behave under powers and products?
\end{prob}

\section{Acknowledgements}

A significant portion of the above presented research was done at the University of Toronto.
The authors would like to thank Franklin Tall and William Weiss for their helpful comments.
We thank the Set Theory and Topology group of the Alfr\'ed R\'enyi Institute of Mathematics (Hungarian Academy of Sciences) for further help in preparing this paper. We deeply thank Victoria Gitman for bringing to our attention Theorem \ref{joel}, which was imperative for our equiconsistency result.

 The first named author was partially supported by Capes (BEX-1088-11-4). The second named author was supported in part by the Ontario Trillium Scholarship and the FWF Grant I1921.


\begin{thebibliography}{50}




\bibitem{arh81}
A. V. Arhangel'ski\u\i, On $d$-separable spaces, Seminar
on General Topology, Moscow State University Publishing House, Moscow,
1981, 3--8.

\bibitem{leandromD}
L. F. Aurichi, $D$-spaces, topological games, and selection
  principles, Topology Proceedings 36 (2010), 107--122.

\bibitem{dD}
L. F. Aurichi, R. R. Dias and L. R. Junqueira, On $d$- and
  $D$-separability,  Topology and its Applications 159 (2012), no. 16, 3445--3452.

\bibitem{bennett}
H. R. Bennett, Quasi-developable spaces, Topology Conference,
Arizona State University, Tempe, 1968, 314--317.

\bibitem{lutzer} H. R. Bennett and D. Lutzer, On a question of Maarten Maurice. Topology and its Applications 153 (2006), no. 11, 1631--1638. 

\bibitem{Boos} W. Boos, Infinitary compactness without strong
inaccessibility, Journal of Symbolic Logic 41 (1976), no. 1, 33--38.

\bibitem{joel} B. Cody, S. Cox, J. D. Hamkins and T. Johnstone, The weakly compact embedding property. (in preparation)

\bibitem{Cud} D. V. Chudnovsky, Topological properties of products of discrete spaces, and set theory (in Russian), Doklady Akademii Nauk SSSR 204 (1972), 298--301; English translation: Soviet Mathematics. Doklady 13 (1972), 661--665.

\bibitem{engelking} R. Engelking, General topology. Second edition. Sigma Series in Pure Mathematics, Vol. 6. Heldermann, Berlin, 1989.

\bibitem{faber} M. J. Faber, Metrizability in generalized ordered spaces. Mathematical Centre Tracts, No. 53. Mathematisch Centrum, Amsterdam, 1974. v+120 pp. 

\bibitem{gorelic} I. Gorelic, The $G_\delta$-topology and incompactness of $\omega^\alpha$, Commentationes Mathematicae Universitatis Carolinae 37 (1996), no. 3, 613--616.

\bibitem{gruen} G. Gruenhage, A note on $D$-spaces, Topology and its Applications 153 (2006), no. 13, 2229--2240.
  
\bibitem{Jech}   T. Jech, Set theory. The third millennium edition, revised and expanded. Springer Monographs in Mathematics. Springer-Verlag, Berlin, 2003. xiv+769 pp. ISBN: 3-540-44085-2
  

\bibitem{joel2} J. D. Hamkins, The weakly compact embedding property, slides. http://jdh.hamkins.org/wp-content/uploads/2015/05/Weakly-compact-embedding-property-CMU-2015.pdf

\bibitem{juhi} I. Juh\'asz and Z. Szentmikl\'ossy, On $d$-separability of powers and $C_p(X)$. Topology and its Applications 155 (2008), no. 4, 277--281.


\bibitem{kurepa} \DJ. Kurepa, Le probl\`eme de Souslin et les espaces abstraits,
Comptes Rendus Hebdomadaires des S\'eances de l'Acad\'emie des Sciences 203 (1936), 1049--1052.

\bibitem{los}  J. \L o\'s, Linear equations and pure subgroups, Bulletin de l'Acad\'emie Polonaise des Sciences 7 (1959), 13--18.

\bibitem{moore} J. T. Moore, A solution to the L space problem, Journal of the American Mathematical Society 19 (2006), no. 3, 717--736.

\bibitem{moore2} J. T. Moore, An L-space with a $d$-separable square, Topology and its Applications 155 (2008), no. 4, 304--307. 


\bibitem{Mrow} S. Mr\'owka, Some strengthenings of the Ulam nonmeasurability condition, Proceedings of the American Mathematical Society 25 (1970), 704--711.

\bibitem{myc} J. Mycielski, $\alpha$-incompactness of $N^\alpha$, Bulletin de l'Acad\'emie Polonaise des Sciences 12 (1964), 437--438.

\bibitem{peng} Y. Peng, An L-space with non-Lindel\"of square. Topology Proceedings 46 (2015), 233--242. 

\bibitem{qiao}
Y.-Q. Qiao, On non-Archimedean spaces, Ph.D. thesis, University
of Toronto, 1992.

\bibitem{qiao2} Y.-Q. Qiao and F. Tall, Perfectly normal non-metrizable non-Archimedean spaces are generalized Souslin lines, Proceedings of the American Mathematical Society 131 (2003), no. 12, 3929--3936.

\bibitem{scheep} M. Scheepers, Selection principles and covering properties in topology. Note di Matematica 22 (2003/04), no. 2, 3--41. 

\bibitem{weaksep} D. T. Soukup, L. Soukup and S. Spadaro, Comparing weak versions of separability, Topology and its Applications 160 (2013), no. 18, 2538--2566.

\bibitem{tkac} V. V. Tkachuk, Function spaces and $d$-separability, Quaestiones Mathematicae 28 (2005), no. 4, 409--424. 

\bibitem{watson} S. Watson, Problems I wish I could solve, in: J. van Mill and G. M. Reed, eds., Open Problems in Topology. North-Holland, Amsterdam, 1990. pp. 37--76.



\end{thebibliography}
\end{document}